\documentclass[a4paper]{article}
\usepackage{mathrsfs}
\usepackage{latexsym,bm}
\usepackage{amssymb,amsmath,amsthm}
\usepackage[english]{babel}
\usepackage[colorlinks=true]{hyperref}
\usepackage{graphicx}
\usepackage{color}
\usepackage[title]{appendix}
\usepackage{titlesec}
\usepackage{thmtools}  

\allowdisplaybreaks[4]

\newtheorem{theorem}{Theorem}[section]
\newtheorem{lemma}[theorem]{Lemma}
\newtheorem{proposition}[theorem]{Proposition}

\declaretheorem[
name=Remark,        
style=definition,
sibling=theorem
]{remark}
\newtheorem{corollary}[theorem]{Corollary}
\numberwithin{equation}{section}

\hypersetup{linkcolor=blue,urlcolor=red,citecolor=red}

\title{Quantitative rapid boundary stabilization via modal decomposition and its application to the Allen--Cahn equation}
\author{Shengquan Xiang{\footnote{School of Mathematical Sciences, Peking University, Beijing 100871, China (e-mail: shengquan.xiang@math.pku.edu.cn).}}\and Yu Xiao{\footnote{School of Mathematics and Statistics, Wuhan University, Wuhan 430072, China (e-mail: xiaoyu\_math@whu.edu.cn).}}\and Can Zhang{\footnote{School of Mathematics and Statistics, Wuhan University, Wuhan 430072, China (e-mail: canzhang@whu.edu.cn).}}}
\date{}

\begin{document}
\selectlanguage{english}
\maketitle

\begin{abstract}
We investigate quantitative rapid stabilization for the one-dimensional Allen--Cahn equation and develop a quantitative modal decomposition approach that makes explicit the dependence of the feedback laws and stabilization costs on the prescribed decay rate. We construct an explicit feedback law on the finite-dimensional unstable modes via Ackermann's formula. The explicit structure of the feedback allows us to derive quantitative low-frequency estimates, which, combined with the frequency Lyapunov method, yield quantitative stabilization estimates. Together with the stabilization framework of \cite{XZ}, the resulting estimates can be adapted to a broader class of one-dimensional parabolic models. We further construct piecewise feedback laws that yield the null controllability with control costs and finite-time stabilization.
\end{abstract}

{\bf Keywords.} Modal decomposition, frequency Lyapunov, quantitative stabilization, controllability
\vskip 5pt
{\bf AMS subject classifications.} 35K55, 93B05, 93D15

\section{Introduction}
\subsection{Motivation}
\noindent \textit{Modal decomposition and rapid stabilization.} Stability analysis, particularly boundary feedback stabilization, is a core topic in the control theory of parabolic equations. The goal of exponential stabilization is to design
a feedback law ensuring that the energy $E(t)$ of the closed-loop system decays exponentially at a prescribed
rate $\lambda>0$: 
\begin{equation*}
E(t) \leq M_{\lambda}\mathrm{e}^{-\lambda t}E(0),\;\; \forall t\ge 0,
\end{equation*} 
for some $M_\lambda>0$. If such a feedback law can be constructed for arbitrarily large $\lambda$, we call this \textit{rapid stabilization}.
\vspace{1mm}

We recall the significant role played by linear--quadratic (LQ) approaches in the stabilization of infinite-dimensional systems. Developed in the works of Lasiecka--Triggiani \cite{LT}, Barbu--Triggiani \cite{BT}, Barbu--Wang \cite{BW}, and others, LQ theory provides a systematic Riccati-equation approach for constructing stabilizing feedback laws from suitable optimal control problems. Within this framework, the authors of \cite{TWX} established an equivalence between stabilizability and weak observability inequalities, which was applied to parabolic systems \cite{HWW}. For evolution equations governed by $C_0$-groups, the Gramian method plays a central role in achieving rapid stabilization, as shown in \cite{Ur}; it can be viewed from an optimal-control perspective.

In fact, stabilization is often realized by modifying the \textit{spectrum} of the system, and spectral decomposition methods \cite{BT,CT1,X} have gained considerable popularity. The core idea of these approaches is to decompose the equation into \textit{low- and high-frequency} components. Utilizing the intrinsic dissipativity of the high-frequency part, the problem is reduced to stabilizing a finite-dimensional low-frequency subsystem.
\vspace{1mm}

The \textit{modal decomposition} method is a powerful technique among spectral decomposition methods. By spectral analysis and state decomposition, this method constructs a state-feedback control using only a finite number of \textit{Fourier modes}. In recent years, observer-based control has attracted growing interest. Notably, modal decomposition has evolved into a widely adopted technique for implementing observer-based control for linear systems; see, e.g., Katz-Fridman \cite{KF} and Lhachemi-Prieur \cite{LP1}. Since only a finite set of easily accessible modal information is required, this method provides an effective implementation framework for both state-feedback and observer-based control schemes.
\vspace{1mm}

The stabilization of nonlinear systems is generally more challenging than that of linear systems. However, one often expects a local robustness property: a feedback stabilizing the
linearized counterpart may also stabilize the nonlinear system under suitable smallness assumptions. Treating nonlinear terms as perturbations, Lyapunov-based stability analysis provides an effective approach for stabilizing nonlinear systems; its core is the construction of an exponentially stable Lyapunov function.

In the context of addressing the controllability of semilinear heat equations, Coron-Tr\'{e}lat \cite{CT1} combined the modal decomposition method with a constructive Lyapunov function to establish the stability on a finite time interval. On the basis of this stability result, they further derived the global controllability between steady states. In recent years, the Coron-Tr\'{e}lat framework has been widely applied to address various stabilization problems of one-dimensional parabolic equations. These include linear, nonlinear, stochastic, cascaded, and state-delay problems, as well as time-delay and sampled-data control systems; see, e.g., \cite{KF,KF2,LP1,LP4,LS,PWF,PT,SWZ}.\\[-2mm]
 
\noindent \textit{Frequency Lyapunov method and quantitative rapid stabilization.} In rapid stabilization, the constant $M_\lambda$ represents the stabilization cost associated with the prescribed decay rate $\lambda$. The problem of \textit{quantitative rapid stabilization} is to obtain a quantitative estimate of $M_\lambda$ in terms of $\lambda$.

Such estimates are important for understanding the behavior of the closed-loop system. Indeed, $M_\lambda$ affects both the robustness of the closed-loop system and the time needed for the energy to reach a prescribed level. For instance, under a bounded perturbation of size $\varepsilon$, one typically obtains an estimate of the form $E(t)\leq M_\lambda e^{-(\lambda-C_\varepsilon M_\lambda)t}E(0)$. Thus, the growth of $M_\lambda$ directly influences the admissible perturbation size. Moreover, $M_\lambda$ also determines how fast the energy can reach a prescribed threshold. If $M_\lambda=e^{\mathcal O(\lambda^\alpha)}$ with $\alpha>1$, then the decay factor $e^{-\lambda t}$ cannot compensate for this cost within a short time, and a much longer stabilization time may be required.
\vspace{1mm}

In recent years, quantitative rapid stabilization has attracted growing interest. One important method is backstepping, introduced by Krsti\'c and his collaborators \cite{BK,KS}. It constructs a feedback law through a Volterra transformation so as to shift the overall spectrum of the closed-loop system; see also the Fredholm backstepping method in Coron--L\"u \cite{CL1}. For quantitative results in the Volterra and in particular the recent achievement in Fredholm backstepping frameworks, we refer to \cite{CN} and \cite{GHMXZ}, respectively. We also mention our recent work \cite{XXZ}, where the quantitative rapid stabilization of parabolic equations was established through LQ theory, and the equivalence between quantitative rapid stabilization and quantitative observability inequalities was proved for linear systems.

Another important approach is the \textit{frequency Lyapunov} method introduced by Xiang \cite{X}. This method separates the dynamics into low and high frequencies: the high-frequency part is controlled by intrinsic dissipation, while the low-frequency part is stabilized by a finite-dimensional feedback law. The Lyapunov function is then designed to balance these two mechanisms. The frequency Lyapunov method has been applied to several nonlinear stabilization problems, including the Navier--Stokes equations \cite{X1} and the heat flow \cite{CX1}, and has also inspired related developments for stochastic problems \cite{HLP}, disturbance problems \cite{GPC} and boundary control \cite{XZ}.
\vspace{1mm}

The quantitative viewpoint further provides constructive controllability results. Explicit bounds on $M_\lambda$ can be used in time-iteration arguments to derive the finite-time null controllability with explicit control costs, and can also be used to finite-time stabilization, as introduced in \cite{CN}. This differs from the classical Carleman-based approach to nonlinear null controllability \cite{PWZ}, and from the Lebeau--Robbiano strategy \cite{LR,M}, where control costs are obtained through spectral inequalities. In the quantitative stabilization strategy, the control cost is obtained directly from the feedback estimates on each time interval; see \cite{X2,X1,X}. Related ideas have also been used for the controllability of stochastic heat equations \cite{HLP}.
\\[-2mm]

In a recent work by the last two authors \cite{XZ}, the frequency Lyapunov method was combined with modal decomposition to achieve rapid boundary stabilization for one-dimensional nonlinear parabolic equations. However, to the best of our knowledge, few works have provided \textit{quantitative} results within the framework of modal decomposition. The purpose of this paper is to further investigate the quantitative properties of the frequency Lyapunov method applied to the modal-decomposition feedback proposed in \cite{XZ}. The main challenge consists in deriving quantitative low-frequency estimates.\\[-2mm]

Let $s\ge 0$. We focus on the stabilization of the Allen--Cahn model
\begin{equation}\label{h-1-first}
y_t(t,x)=y_{xx}(t,x)+y(t,x)-y^3(t,x)\;\;\;\; \quad\text{for }\;(t,x)\in (s,+\infty)\times (0,1).
\end{equation}
To stabilize \eqref{h-1-first}, we first suppose that the control acts on the left boundary as $y(t,0)=a(t)$. In order to address regularity issues and motivated by the control constructed in \cite{X2,XZ}, we introduce an auxiliary scalar control $u(\cdot)$ to stabilize the system in higher-regularity spaces:
\begin{equation}\label{a67}
a_t(t)=(1-\frac{\pi^2}{4})a(t)+u(t)\quad\text{for }\;\;t\in(s,+\infty).
\end{equation}

We introduce the Banach space $\mathcal{H}=L^2(0,1)\times \mathbb{R}$ with the norm $\|(y,a)\|_{\mathcal{H}}=\|y\|_{L^2(0,1)}+|a|$. Regarding $(y,a)$ as the state, we obtain the following coupled system:
\begin{equation}\label{heat-sys}
\left\{\begin{array}{ll}
 y_t(t,x) = y_{xx}( t,x)+y(t,x)-y^3( t,x)&\text{for }(t,x)\in(s,+\infty)  \times (0,1),\\[2mm]
y(t,0)=a(t),\;\;\;y(t,1)=0&\text{for }t\in(s,+\infty) ,\\[2mm]
a_t(t)=(1-\frac{\pi^2}{4})a(t)+u(t)&\text{for }t\in(s,+\infty),
\end{array}\right.
\end{equation}
with the initial datum $(y_0,a_0)\in \mathcal H$, where the control $u(\cdot)$ takes values in $\mathbb{R}$.

\subsection{Main result}
The main result of this paper is the quantitative rapid stabilization within the framework of modal decomposition.
 \begin{theorem}[Quantitative rapid stabilization]\label{qu-ra-sta}
There exists $D\ge1$ such that, for every $\lambda>2\pi^2$, there exists a linear feedback law $\mathcal{K}_{\lambda}:\mathcal{H}\to\mathbb{R}$ depending only on the finite-dimensional low-frequency modes, with the following property: for every $s\ge 0$ and for every $(y_0,a_0)\in \mathcal{H}$ satisfying $\|(y_0,a_0)\|_{\mathcal{H}}\le \rho_{\lambda}:=e^{-D\sqrt{\lambda}\ln\lambda}$, the system \eqref{heat-sys} with $u(t)=\mathcal{K}_{\lambda}(y(t),a(t))$ admits a unique solution $(y,a)\in C([s,+\infty);\mathcal{H})$ satisfying 
\begin{equation}\label{exp-s-heat}
|u(t)|+  \|(y(t),a(t))\|_{\mathcal{H}}\le e^{D\sqrt{\lambda}\ln\lambda}e^{-\lambda (t-s)}\|(y_0,a_0)\|_{\mathcal{H}},\;\;\forall t\ge s.
\end{equation}
\end{theorem}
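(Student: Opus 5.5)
We will follow the modal-decomposition and frequency-Lyapunov strategy of \cite{XZ}, keeping track of every constant's dependence on $\lambda$.

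\textbf{Change of variables.} First homogenize the boundary condition by setting
\[
z(t,x)=y(t,x)-(1-x)a(t)\cos\!\big(\tfrac{\pi x}{2}\big)\big/\cos 0,
\]
or, more simply, $z=y-\phi(x)a$ with $\phi(x)=\cos(\pi x/2)$. Then $\phi(0)=1$, $\phi(1)=0$ and $\phi''+\phi=(1-\pi^2/4)\phi$. This explains the coefficient in \eqref{a67}: the lift satisfies $\phi_t=(1-\pi^2/4)\phi a+\phi u$. Hence $z$ solves
\[
z_t=z_{xx}+z-\phi u-(z+\phi a)^3
\]
with homogeneous Dirichlet conditions.

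\textbf{Modal system and feedback.} Expand $z$ in the Dirichlet eigenbasis $e_n=\sqrt2\sin(n\pi x)$, with eigenvalues $\lambda_n=n^2\pi^2-1$. Let $N\sim\sqrt{\lambda}/\pi$ be the first index with $\lambda_{N+1}\ge 2\lambda$; this is possible since $\lambda>2\pi^2$. The low modes $(a,z_1,\dots,z_N)$ then satisfy a finite-dimensional controlled system $\dot X=AX+Bu+\text{(cubic)}$, where $A$ is diagonal apart from the $a$-coupling and $B$ has entries $b_0=1$, $b_n=-\langle\phi,e_n\rangle\sim n^{-1}$, all nonzero. This system is controllable by the Kalman/Hautus test, since its eigenvalues are distinct and every $b_n\ne0$.

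We place the closed-loop poles at explicit values, for instance $-2\lambda-k$ or $-(2\lambda+\lambda_k)$, and define the gain $K_\lambda$ through Ackermann's formula. Diagonal structure makes this explicit: the gain components are
\[
k_n=\frac{\prod_j(\mu_n-\nu_j)}{b_n\prod_{j\ne n}(\mu_n-\mu_j)},
\]
a Lagrange-interpolation form. We then set $\mathcal K_\lambda(y,a)=K_\lambda X$, which depends only on finitely many modes.

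\textbf{Quantitative low-frequency estimates (the key step).} We need three bounds:
\begin{itemize}
\item $|K_\lambda|\le e^{C\sqrt\lambda\ln\lambda}$;
\item a bound on the eigenvector matrix $P$ diagonalizing $A+BK$;
\item a bound on its inverse, giving $\|e^{(A+BK)t}\|\le \|P\|\|P^{-1}\|e^{-2\lambda t}$.
\end{itemize}
Because $\mu_n\approx n^2\pi^2$, the products have the form $\prod|n^2-j^2|\sim (N!)^2$-type factorials. Each such quantity is therefore $e^{O(N\ln N)}=e^{O(\sqrt\lambda\ln\lambda)}$, and $1/b_n$ contributes only polynomially. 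The eigenvectors of $A+BK$ are $(\nu_j-A)^{-1}B$, a Cauchy-type matrix, and its inverse is bounded via the explicit Cauchy determinant, again by $e^{C\sqrt\lambda\ln\lambda}$. This is the step I expect to be the main obstacle: the bounds must be sharp enough to avoid worse factors such as $e^{\lambda}$. The first thing I would try is choosing the pole separation so that it mirrors the spacing $\mu_n$, making the Cauchy products telescope.

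\textbf{Lyapunov function and closing the argument.} Define
\[
V=\langle QX,X\rangle+\sum_{n>N}z_n^2,
\]
where $Q$ is the Lyapunov matrix for $A+BK+\lambda'$; equivalently, take $V=|P^{-1}X|^2+\|z_{\text{high}}\|^2$ with suitable weights. The high modes satisfy
\[
\tfrac{d}{dt}\|z_h\|^2\le-2\lambda_{N+1}\|z_h\|^2+2|b_h|\,|u|\,\|z_h\|+\dots
\]
The feedback forcing $\phi u$ leaks into the high modes. To handle this we weight the high part by a small factor $\varepsilon=e^{-C\sqrt\lambda\ln\lambda}$ relative to the low part. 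This is the frequency-Lyapunov balance: it yields $\dot V\le -2\lambda V$ plus cubic terms bounded by $e^{C\sqrt\lambda\ln\lambda}V^2$, with $L^\infty$ control of $z$ obtained by using $a$-regularity or parabolic smoothing from \cite{XZ}.

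For initial data of size below $\rho_\lambda$, a bootstrap keeps the nonlinearity absorbed. Well-posedness then follows by the fixed-point argument of \cite{XZ}, and converting back via $y=z+\phi a$ and $u=K_\lambda X$ gives \eqref{exp-s-heat} with $D$ large.
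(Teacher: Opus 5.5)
Your low-frequency analysis matches Section~\ref{estimate} of the paper. This covers the Lagrange form of the Ackermann gain (with a minus sign, $k_n=-\prod_j(\mu_n-\nu_j)/(b_n\prod_{j\ne n}(\mu_n-\mu_j))$), the eigenvectors $(\nu_j-A)^{-1}B$ forming a Cauchy matrix that is inverted explicitly, and the bounds $(C\lambda)^{N+1}=e^{\mathcal O(\sqrt\lambda\ln\lambda)}$. The modal matrix is in fact exactly diagonal, as your own $z$-equation (no linear $a$-term) shows.

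\textbf{The gap is in closing the nonlinear estimate.} $V$ is equivalent to $a^2+\|z\|_{L^2}^2$, so your claim that the cubic contributions to $\dot V$ are bounded by $e^{C\sqrt\lambda\ln\lambda}V^2$ is false. Those contributions involve $\langle(z+\phi a)^3,e_n\rangle$ and $\langle z_h,(z+\phi a)^3\rangle$, i.e.\ $L^3$ or $L^6$ norms of $z+\phi a$, which no $L^2$ quantity controls. Appealing to ``parabolic smoothing'' does not help: it gives no uniform-in-time $L^\infty$ bound inside an $L^2$ bootstrap, since it degenerates at $t=s$ and would itself have to be propagated.

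The missing idea is to keep the surplus dissipation instead of stopping at $-2\lambda V$. With your $N$, $-2\lambda_n+2\lambda\le-\lambda_n$ for $n>N$. Weight the low part by $\gamma\ge\lambda_N$, also large enough to absorb the $u$-leakage (in your normalization, $\varepsilon\le 1/\lambda_N$ as well as small). Then $F^\top P+PF+2\lambda P=-I$, together with Young/Schur-complement bounds, gives
\[
\dot V+2\lambda V\le -\tfrac12\bigl(a^2+\|z_x\|_{L^2}^2\bigr)+C_\lambda\bigl(a^2+\|z_x\|_{L^2}^2\bigr)\bigl(a^2+\|z\|_{L^2}^2\bigr)^2 .
\]
The last term comes from the one-dimensional inequality $\|\varphi\|_{L^\infty}^2\le 2\|\varphi\|_{L^2}\|\varphi_x\|_{L^2}$ for $\varphi(1)=0$, i.e.\ $\|(z+\phi a)^3\|_{L^2}^2\le 4\|(z+\phi a)_x\|_{L^2}^2\|z+\phi a\|_{L^2}^4$. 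Since $C_\lambda=e^{\mathcal O(\sqrt\lambda\ln\lambda)}$, an $L^2$ threshold of order $C_\lambda^{-1/4}$ closes the bootstrap, and this is exactly where $\rho_\lambda$ comes from.

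Separately, differentiating $V$ requires strong solutions. The well-posedness of \cite{XZ} holds in $\mathcal H^1$ under the compatibility condition $y_0-a_0\phi\in H^1_0$, with a blow-up alternative in the $\mathcal H^1$ norm. For general data in $\mathcal H$ you still need two things:
\begin{itemize}
\item[(i)] a global $H^1$ a priori bound, which the paper obtains by an energy estimate and Gr\"onwall;
\item[(ii)] a passage from $\mathcal H$ data to compatible data. The paper uses a short-time mild solution (Lemma~\ref{w2}) and restarts at some $\tilde T\in(s,s+\lambda^{-1/2})$ with $w(\tilde T)\in H^1_0$, costing only a factor $e^{\sqrt\lambda}$.
\end{itemize}
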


To the best of our knowledge, this is the first quantitative result on rapid stabilization within the modal-decomposition framework. Moreover, this quantitative aspect also allows us to explore further properties such as controllability, disturbance, observer,  and stochastic versions. The present work has two main {novelties}:
\begin{itemize}
	\item[1.] Traditional modal decomposition arguments usually give qualitative rapid stabilization: the finite-dimensional low-frequency subsystem is stabilized by pole placement, while the high-frequency component is handled by dissipation. However, the pole-placement theorem only guarantees the existence of controller gains and Lyapunov matrices. In this paper, we overcome this difficulty by using explicit expressions for the feedback gains via Ackermann's formula \eqref{ac-f}. The explicit construction allows us to obtain quantitative low-frequency estimates. More precisely, by assigning the eigenvalues in the pole-placement procedure and using suitable matrix-estimation techniques, we reduce the quantitative analysis to estimating the distances between the pole-placement eigenvalues and the low-frequency eigenvalues of the linearized counterpart.
	
	\item[2.] Beyond the derivation of explicit estimates, our analysis provides a quantitative realization of the frequency Lyapunov method in the context of boundary rapid stabilization. In particular, it extends the qualitative boundary framework  in \cite{XZ} to a quantitative setting for one-dimensional nonlinear parabolic equations such as the Allen--Cahn equation.
\end{itemize}

\begin{remark}\label{remark2-control}
Several comments are in order.
\begin{itemize}
\item[1.] The main point of Theorem \ref{qu-ra-sta} is not only the existence of an arbitrarily large decay rate, but the quantitative estimate of the corresponding stabilization cost and admissible local radius. The factor $e^{D\sqrt{\lambda}\ln\lambda}$ may also be expressed, after enlarging the constant, in the form $e^{D_p\lambda^{\frac{p}{p+1}}}$ for any $p>1$; see Remark \ref{quant-re} for a detailed discussion. This form is especially convenient in the time-iteration arguments leading to null controllability.

\item[2.] Since the modal decomposition method serves as an important tool for addressing boundary stabilization problems of one-dimensional parabolic models, the quantitative framework established in this paper can be applied to various settings. For instance, it can be extended to stabilization problems involving state-delay equations, observer-based inputs, time-delay inputs/outputs, sampled-data inputs, and stochastic differential equations. In addition, modal decomposition has also been adopted to achieve stabilization for other PDEs beyond parabolic equations, suggesting that the quantitative framework proposed in this paper is also promising for such applications.

\item[3.] Combined with piecewise feedback laws, the quantitative methodology yields a constructive derivation of the boundary null controllability with explicit control costs, thereby going beyond previous qualitative results based on Carleman estimates and fixed-point arguments. We will demonstrate this approach on the Allen--Cahn equation as a representative model in the sequel; building on \cite{X,XZ}, the quantitative estimates addressed here illustrate how the frequency Lyapunov method can be quantitatively implemented for boundary rapid stabilization and boundary null controllability in concrete one-dimensional nonlinear parabolic models.
\end{itemize} 
\end{remark}

As a consequence of the quantitative rapid stabilization, we obtain null controllability and finite time stabilization results.
\begin{corollary}[Null controllability with explicit control costs]\label{main-th-null-control}
Let $T \in (0,1) $. Then, for any $p>1$, there exist ${Q}_p \geq 1$ and a piecewise feedback law $U_p:(0,T)\times \mathcal{H}\to \mathbb{R}$ such that, for every $(y_0,a_0)\in\mathcal{H}$ satisfying $\|(y_0,a_0)\|_{\mathcal{H}}\leq R_p:=e^{-{{Q}_p}/{T^p}}$, the solution of the system \eqref{heat-sys} with $u(t)=U_p(t,(y(t),a(t)))$ satisfies $(y(T),a(T))=0$ in $\mathcal{H}$. Moreover, the control cost is given by 
\begin{equation}\label{new-2-1}
 \|u\|_{L^\infty(0,T)}\le e^{{{Q}_p}/{T^p}} \|(y_0,a_0)\|_{\mathcal{H}}.
\end{equation}
\end{corollary}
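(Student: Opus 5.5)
The plan is to derive Corollary \ref{main-th-null-control} from Theorem \ref{qu-ra-sta} by a time-iteration argument with piecewise constant decay rates, in the spirit of \cite{CN,X}. First, following Remark \ref{remark2-control}, we rewrite the cost in Theorem \ref{qu-ra-sta} as $e^{D_p\lambda^{\frac{p}{p+1}}}$ with $D_p\ge1$ depending only on $p$. The precise estimate will be
\[
|u(t)|+\|(y(t),a(t))\|_{\mathcal H}\le e^{D_p\lambda^{\frac{p}{p+1}}}e^{-\lambda(t-s)}\|(y_0,a_0)\|_{\mathcal H},
\]
valid whenever $\|(y_0,a_0)\|_{\mathcal H}\le e^{-D_p\lambda^{\frac{p}{p+1}}}$. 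The key point is that the exponent $p/(p+1)<1$ is sublinear in $\lambda$.

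Next we partition $[0,T)$. We set $T_n=T(1-2^{-n})$, so that the interval $[T_n,T_{n+1})$ has length $\tau_n=T2^{-(n+1)}$. On this interval we use the feedback $\mathcal K_{\lambda_n}$ with $\lambda_n=\Lambda 4^{n}$, where $\Lambda=c/T^{p+1}$ and $c$ is a large constant depending on $p$. This defines the piecewise feedback law $U_p(t,\cdot)=\mathcal K_{\lambda_n}$ for $t\in[T_n,T_{n+1})$. The time-invariance of the estimate (it holds for every $s\ge0$) lets us restart at each $T_n$. Writing $E_n=\|(y(T_n),a(T_n))\|_{\mathcal H}$, we obtain
\[
E_{n+1}\le \exp\bigl(D_p\lambda_n^{\frac{p}{p+1}}-\lambda_n\tau_n\bigr)E_n .
\]
Since $\lambda_n\tau_n=c\,2^{n-1}T^{-p}$ and $D_p\lambda_n^{\frac{p}{p+1}}=D_pc^{\frac{p}{p+1}}T^{-p}4^{\frac{np}{p+1}}$, the growth rates differ: $4^{\frac{p}{p+1}}>2$ when $p>1$. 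With these choices the cost term eventually dominates, so the schedule is wrong.

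The main obstacle is therefore to choose $\lambda_n$ and $\tau_n$ so that $\lambda_n\tau_n$ beats $\lambda_n^{p/(p+1)}$ uniformly in $n$ while $\sum\tau_n<\infty$. We will instead take $\lambda_n=\Lambda q^{n}$ with $\tau_n=T(1-r)r^{n}$, requiring $qr>q^{\frac{p}{p+1}}$, that is $r>q^{-\frac1{p+1}}$. For instance, $q=2^{p+1}$ and $r=3/4>1/2$ work. Then $\lambda_n\tau_n-D_p\lambda_n^{\frac p{p+1}}\ge \tfrac12\lambda_n\tau_n\gtrsim c\,T^{-p}(qr)^n$ once $c$ is large, because $\Lambda T=cT^{-p}$ and $\Lambda^{\frac p{p+1}}=c^{\frac p{p+1}}T^{-p}$ share the same $T$-scaling. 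Hence
\[
E_n\le \exp\Bigl(-\tfrac{c'}{T^p}\sum_{k<n}(qr)^k\Bigr)E_0\to0,
\]
which is super-exponential decay.

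It remains to verify the smallness hypothesis $E_n\le e^{-D_p\lambda_n^{p/(p+1)}}$ at every restart. Because $E_n\le e^{-c''T^{-p}(qr)^{n}}E_0$ and $\lambda_n^{p/(p+1)}\sim T^{-p}q^{\frac{np}{p+1}}$ with $qr>q^{\frac p{p+1}}$, this reduces to requiring $E_0\le e^{-Q_p/T^p}$. Continuity in $\mathcal H$ on each piece, together with $E_n\to0$, gives $(y(T),a(T))=0$.

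Finally we bound the control. On piece $n$ we have $|u|\le e^{D_p\lambda_n^{p/(p+1)}}E_n$, and the decay of $E_n$ absorbs this growth for $n\ge1$. The worst case is $n=0$, where the bound is $e^{D_p\Lambda^{p/(p+1)}}E_0=e^{C_p/T^p}E_0$, and this yields \eqref{new-2-1} after enlarging $Q_p$. The condition $T<1$ is used only to absorb constants into $T^{-p}$.
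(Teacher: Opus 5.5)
Your proposal is correct and follows the same time-iteration argument as the paper. You apply Theorem~\ref{qu-ra-sta} in the form $e^{D_p\lambda^{p/(p+1)}}$ of Remark~\ref{quant-re} on geometrically shrinking intervals with geometrically growing rates, check the smallness hypothesis at each restart using the accumulated decay, and observe that the control cost is dominated by the first interval. The only difference is the schedule: the paper uses your $q=2^{p+1}$ but with $r=\frac12$ (intervals of length $T/2^{n+1}$ and $\ell_n=Q^{p+1}2^{(p+1)n}/T^{p+1}$), so that $\ell_n\tau_n$ and $D_p\ell_n^{p/(p+1)}$ both grow exactly like $2^{pn}$ and the large constant $Q=2^{p+2}D_p$ alone makes the net exponent negative, which shows that your strict condition $qr>q^{p/(p+1)}$ is sufficient but not needed.
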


\begin{corollary}[Finite-time stabilization] \label{f-t-s}
Let $T\in(0,1)$. There exist $\varrho>0$ and an explicit $T$-periodic piecewise feedback law\footnote{A $T$-periodic piecewise feedback law is a piecewise feedback law $U$ satisfying $U(t+T,(y,a))=U(t,(y,a)),\;\forall t\in\mathbb{R}$.} $U:\mathbb{R}\times\mathcal{H}\to\mathbb{R}$ such that the following properties hold.

\smallskip

\noindent\textup{(i) Well-posedness.}
The feedback law $U$ is proper. More precisely, for every initial time $s\ge 0$ and every initial datum $(y_0,a_0)\in\mathcal{H}$, the closed-loop system \eqref{heat-sys} with $u(t)=U(t,(y(t),a(t)))$ admits a unique global mild solution
$(y,a)\in C([s,+\infty);\mathcal{H})$. We denote the corresponding closed-loop solution map by
\[
\Upsilon(t,s;(y_0,a_0)) := (y(t),a(t)),\quad t\ge s.
\]

\smallskip

\noindent\textup{(ii) Finite-time null controllability.}
If $\|(y_0,a_0)\|_{\mathcal{H}}\le \varrho$, then
\[
\Upsilon(s+2T,s;(y_0,a_0))=0,\quad \forall\, s\ge 0.
\]

\smallskip

\noindent\textup{(iii) Uniform stability.}
For each $\epsilon>0$, there exists $\delta >0$ such that, for every $t'\ge 0$ and $\|(y_0,a_0)\|_{\mathcal{H}}\le \delta$, we have $$\| \Upsilon(t,t';(y_0,a_0))\|_{\mathcal{H}}\le\epsilon, \;\;\forall t\ge t'.$$
\end{corollary}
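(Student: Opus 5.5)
The argument for Corollary \ref{f-t-s} splits into three stages: a null-controlling phase on a window of length $T$, a rest phase on the next window of length $T$, and then a check that the resulting periodic law is proper and uniformly stable. Let $T_1=T$, $\tau_n = T(1-2^{-n})$ and $\ell_n=\tau_{n+1}-\tau_n = T2^{-n-1}$, and choose decay rates $\lambda_n$ with $\lambda_n\ell_n \sim 2^{n\beta}$ growing geometrically, for instance $\lambda_n = \Lambda 4^{n}/T^{2}$ with $\Lambda$ large.

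The null-controlling phase follows the time-iteration scheme behind Corollary \ref{main-th-null-control}. On $[\tau_n,\tau_{n+1})$ we use $\mathcal{K}_{\lambda_n}$ from Theorem \ref{qu-ra-sta} (with $\sqrt{\lambda}\ln\lambda \le C_p\lambda^{p/(p+1)}$ for some fixed $p$). Iterating \eqref{exp-s-heat} gives
\[
\|(y(\tau_n),a(\tau_n))\|_{\mathcal H}\le \exp\Big(\sum_{k<n}\big(D\sqrt{\lambda_k}\ln\lambda_k-\lambda_k\ell_k\big)\Big)\|(y_0,a_0)\|_{\mathcal H}.
\]
The exponent is $\le -c\,\lambda_n\ell_n/2 \to -\infty$, because $\lambda_k\ell_k\sim 2^{k}$ dominates $\sqrt{\lambda_k}\ln\lambda_k\sim 2^k\ln 4^k$ only after multiplying $\lambda_k$ by $4^k$ once more. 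So I would instead take $\lambda_k = \Lambda 16^{k}/T^2$: then $\lambda_k\ell_k\sim 8^k$ against $4^k k$, which is fine.

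Two further points need checking at each step.
\begin{itemize}
\item[(a)] The smallness condition $\|(y(\tau_n),a(\tau_n))\|\le\rho_{\lambda_n}=e^{-D\sqrt{\lambda_n}\ln\lambda_n}$ must hold. It does, since the state decays like $e^{-c8^n}$ while $\rho_{\lambda_n}\sim e^{-C4^n n}$, provided $\|(y_0,a_0)\|\le\varrho$ with $\varrho$ small. The first step is where $\varrho$ enters.
\item[(b)] The control $|u|$ on the $n$th interval is bounded by $e^{D\sqrt{\lambda_n}\ln\lambda_n}\|(y(\tau_n),a(\tau_n))\|\to0$.
\end{itemize}
Hence $(y,a)(s+T)=0$ by continuity of the solution in $\mathcal H$; continuity across the accumulation point $s+T$ follows from the uniform bound in (b) and the mild formulation.

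In the rest phase, on $[s+T,s+2T)$ we set $U=0$. Since zero is an equilibrium of \eqref{heat-sys} with $u=0$, the state stays $0$ there, and we extend $U$ $T$-periodically. The issue is that item (ii) asks for arbitrary $s$, not $s\in T\mathbb Z$. Starting at an arbitrary $s$, the solution first runs the remaining part of the current period. It reaches the next multiple $kT$ within time $T$, and from $kT$ the null phase kills the state by $kT+T\le s+2T$.

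To make this work, the period must be $T$, so the null-controlling phase has to occupy only a portion of it. I would therefore use $[kT,kT+T/2)$ for the null phase (replacing $T$ by $T/2$ above) and $[kT+T/2,(k+1)T)$ with $U=0$. The entry piece of length $\le T$ must not amplify the state too much. If $s$ falls inside a null phase at level $n$, the remaining gains multiply to at most $\prod_k e^{D\sqrt{\lambda_k}\ln\lambda_k-\lambda_k\ell_k}\le M$ uniformly, because all factors are $\le1$ except finitely many. If $s$ falls in the rest phase, the uncontrolled Allen--Cahn flow over time $\le T<1$ is locally Lipschitz with constant $\le e^{C}$ for small data. Either way the state at $kT$ is $\le M'\varrho$, which we keep below the smallness threshold. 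This gives (ii), and the same bounds give (iii) with $\delta=\epsilon/M''$.

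For well-posedness (i), the only subtle parts are large data and the accumulation times $kT+T/2$. For large data the feedbacks are linear and bounded on each interval. I expect the main obstacle to be that the cubic term may blow up before the control switches. Global existence should still hold: multiplying by $y$ after lifting the boundary datum $a$ gives $\tfrac{d}{dt}\|y\|^2\le C\|y\|^2+C|a|^2-\|y\|_{L^4}^4$ terms, which are dissipative. Continuity at the accumulation point for large data is the delicate step. There I would only claim existence of a mild solution on each open interval and the limit at the accumulation point. Alternatively, I would define $U$ to switch to $0$ once the state leaves the ball of radius $\rho_{\lambda_n}$, which makes the law proper at the cost of a slightly more complicated definition.
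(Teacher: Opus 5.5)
There is a genuine gap in how you handle a start time $s$ inside a null-controlling phase, and with it in (ii) for general $s$ and in (iii). You claim the remaining gains are bounded by some $M$ uniformly in the level $n$. But Theorem \ref{qu-ra-sta} applies on $[\tau_n,\tau_{n+1})$ only if the state there is at most $\rho_{\lambda_n}=e^{-D\sqrt{\lambda_n}\ln\lambda_n}$, and $\rho_{\lambda_n}\to 0$. A datum of fixed size $\varrho$ or $\delta$ entering at a late switching time violates this for all large $n$, so you have no estimate at all.

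Even when the theorem applies, the bound $e^{D\sqrt{\lambda_n}\ln\lambda_n}e^{-\lambda_n(t-s)}$ has a supremum over $t$ near $s$ that is unbounded in $n$. Since $|K_n|\to\infty$, the control $K_nY$ is not uniformly small on data of size $\delta$. The factors $e^{D\sqrt{\lambda_k}\ln\lambda_k-\lambda_k\ell_k}\le 1$ you invoke are endpoint gains along the designed schedule, where the state at $\tau_k$ is already far below $\rho_{\lambda_k}$. They say nothing about the overshoot of an arbitrary small datum. So $M'$ and $M''$ do not exist, and $\delta=\epsilon/M''$ does not give (iii). This is exactly the obstruction the paper points out for the uncut feedback of Corollary \ref{main-th-null-control}.

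Properness (i) suffers from the same unboundedness. On $[\tau_n,\tau_{n+1})$ a Gr\"onwall bound costs a factor of order $e^{C|K_n|\ell_n}$, and $|K_n|\ell_n\to\infty$. So you have no bound uniform up to the accumulation time, and you cannot show that the limit there exists for general data. Your fallback of switching $U$ to $0$ outside a ball gives a feedback that is discontinuous in the state, so the Lipschitz fixed-point argument of Lemma \ref{w2} no longer applies.

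The missing idea is a Lipschitz cut-off on the control value. On each interval use $\mathcal{F}_{\rho_{\lambda_n}}(\mathcal{K}_{\lambda_n}(y,a))$, where $\mathcal{F}_r(u)=u\chi_r(|u|)$ with $\chi_r=1$ on $[0,r]$ and $\chi_r=0$ on $[2r,\infty)$. Since $\|\mathcal{K}_{\lambda}\|\le\rho_{\lambda}^{-1}$, a continuity argument shows that for data below $\rho_\lambda^3$ the state stays below $\rho_\lambda^2$, so the cut-off is never active. Your null-controlling iteration therefore survives, with thresholds $\rho_{\lambda_n}^3$. At the same time $|u|\le\min\{1,\sqrt{2\|(y,a)\|_{\mathcal H}}\}$ uniformly in $n$, and this uniform bound drives the rest:
\begin{itemize}
\item global existence follows from the $L^2$ energy estimate;
\item the limit at the accumulation time exists, by comparison (maximum principle) with the solutions driven by $u=\pm1$;
\item the state stays uniformly small on a short window before the accumulation time, by the bounded-control short-time Lemma \ref{w1}.
\end{itemize}
Before that window only finitely many feedback laws are active, so a finite induction handles it. Consequently a small datum entering anywhere in $[0,T]$ is below the null-control threshold at time $T$ and is annihilated during $[T,2T]$. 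This also shows that your rest phase, while harmless, is not needed.
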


\noindent \textbf{Organization of the paper.} The rest of the paper is organized as follows. In Section \ref{control}, based on the standard modal decomposition method, a finite-dimensional low-frequency feedback control is designed. Subsequently, Section \ref{estimate} carries out quantitative low-frequency estimates, which constitute the core of this work. Finally, by virtue of the frequency Lyapunov method, Section \ref{Stabilizatipn} presents the proof of Theorem \ref{qu-ra-sta}, namely the proof of quantitative rapid stabilization. The appendices are devoted to several well-posedness results, as well as to the proofs of Corollaries \ref{main-th-null-control} and \ref{f-t-s}.\\[-2mm]

\noindent \textbf{Notation.} Denote by $\langle\cdot,\cdot\rangle$ the inner product in $L^2(0,1)$, by $|\cdot|_2$ the Euclidean norm on $\mathbb{R}^n$, and by $\|\cdot\|_2$ the induced matrix norm. A nonnegative definite matrix $P$ is written as $P\succeq 0$. For positive constants $F_1,F_2$, we write $F_1=e^{\mathcal{O}(F_2)}$ if $F_1\le e^{CF_2}$ for some $C>0$ independent of $F_1,F_2$. Furthermore, $C(\cdots)$, $T(\cdots)$, etc., stand for positive constants depending on the arguments in brackets.\\[-2mm]

 \noindent \textbf{Acknowledgments.} Yu Xiao and Can Zhang were partially supported by NSFC 12422118; Shengquan Xiang was partially supported by NSFC 12571474.

\section{Control design}\label{control}
 We temporarily impose the following \textit{compatibility condition}, denoted by \textbf{(H)}, on the initial datum:

 \noindent\textbf{(H).}\;\;\;The initial datum $(y_0,a_0)\in\mathcal{H}$ satisfies
\begin{equation}\label{cop-con}
    y_0(\cdot)-a_0\cos(\frac{\pi}{2}\;\cdot\;)\in H^1_0(0,1).
\end{equation}    
\noindent This condition enhances the regularity of $(w,a)$ (see Lemma \ref{Well-posedness-th}), thereby enabling the modal decomposition argument.\\[-2mm]

We introduce the operator $A$ by
\begin{equation}\label{Dirichlet-Laplace}
     Af= -f_{xx}, \quad \forall f\in D(A)=H^2(0,1)\cap H^1_0(0,1).
\end{equation}
Its eigenpairs are given by $\lambda_n=(n\pi)^2$ and $\varphi_n(x)=\sqrt{2}\sin(n\pi x)$. Here, $\{\varphi_n\}_{n\ge 1}$ forms an orthonormal basis of $L^2(0,1)$. Let $\psi(x)=\cos\frac{\pi}{2}x$, which satisfies 
\begin{equation*}
   \left\{\begin{array}{ll}
        \psi_{xx}(x)+ \frac{\pi^2}{4}\psi(x)=0\quad \text{for }x\in (0,1),\\
        \psi(0)=1\;\;\text{and}\;\;\psi(1)=0.
    \end{array}\right.
\end{equation*}

Define the operator $\mathcal{D}:\mathbb{R}\to L^2(0,1)$ by $\mathcal{D}a=\psi a$, and set
$$w(t)=y(t)-\mathcal{D} a(t),\;t\ge s.$$ For each $n\ge 1$, let $w_n(t)=\langle w(t),\varphi_n\rangle$. Let $N\in \mathbb{N}$ be an integer to be chosen later, and denote the \textit{low-frequency mode vector} by $ Y(t)=
(a(t),w_1(t) ,\cdots ,w_N(t))^\top\in \mathbb{R}^{N+1}$. 

Consider the following \textit{control design}:
\begin{equation}
    \label{con-heat}
       u(t)=KY(t),\;\;\forall t>s,
\end{equation}
where $K=[k_0,k_1,\cdots,k_N]\in \mathbb{R}^{1\times {(N+1)}}$ represents the controller gains. Equivalently, $u$ is given by the linear feedback law $\mathcal{K}_{\lambda}:\mathcal{H}\to \mathbb{R}$:
\begin{equation}\label{lin-feed}
    \mathcal{K}_{\lambda}(y,a)=k_0a+\sum_{j=1}^Nk_j\langle y-\mathcal{D}a,\varphi_j\rangle,\;\;\forall  (y,a)\in\mathcal{H}.
\end{equation}

The controlled system reads as
\begin{equation} \label{controlled-eqs}
    \left\{\begin{array}{ll}
 w_t(t)+A w (t)=w(t)-\mathcal{D}(KY(t))-\left(\mathcal{D}a(t)+w(t)\right)^3&\text{ for } t\in(s,+\infty),\\[2mm]
a_t(t)=(1- \frac{\pi^2}{4})a(t)+KY(t)&\text{ for } t\in  (s,+\infty),\\[2mm]
w(s )=w_0:=y_0-\mathcal{D}a_0\quad \text{and} \quad a(s)=a_0.
\end{array}\right.
\end{equation}
We consider the space $\mathcal{H}^1=H^1_0(0,1) \times\mathbb{R}$ with the norm $\|(w,a)\|_{\mathcal{H}^1}=\|w\|_{H^1_0(0,1)}+|a|$. Then the condition \textbf{(H)} gives $(w_0,a_0)\in\mathcal{H}^1$. The following lemma follows
from \cite[Lemma 5]{XZ}.
\begin{lemma}
\label{Well-posedness-th}
For each $(y_0,a_0)\in\mathcal{H}$ satisfying \textbf{(H)}, there exists $T_{\text{H}}=T_{\text{H}}\left(\|y_0\|_{H^1(0,1)},|a_0|\right)>0$ such that the system \eqref{controlled-eqs} admits a unique solution $(w,a)\in C([s,s+T_{\text{H}});\mathcal{H}^1)\cap C^1((s,s+T_{\text{H}});\mathcal{H})$ satisfying $w(t)\in H^2(0,1)\cap H^1_0(0,1)$ for $t\in(s,s+T_{\text{H}}).$ Moreover, if $T_{\text{H}}<+\infty$, then $\lim\limits_{t\to (s+T_{\text{H}})^-}\|(w(t),a(t))\|_{\mathcal{H}^1}=+\infty$.
\end{lemma}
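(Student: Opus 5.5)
The plan is to prove Lemma \ref{Well-posedness-th} by a standard semigroup and fixed-point argument on the coupled system \eqref{controlled-eqs}, regarded as an abstract semilinear evolution equation in $\mathcal{H}$ with state $Z=(w,a)$. Write \eqref{controlled-eqs} as $Z_t=\mathcal{A}Z+\mathcal{F}(Z)$. Here
\[
\mathcal{A}(w,a)=\bigl(-Aw+w-\mathcal{D}(KY),\,(1-\tfrac{\pi^2}{4})a+KY\bigr),\qquad \mathcal{F}(w,a)=\bigl(-(\mathcal{D}a+w)^3,\,0\bigr),
\]
with $D(\mathcal{A})=D(A)\times\mathbb{R}$.

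\textbf{Step 1: the linear part.} The map $Z\mapsto Y$ is a bounded finite-rank operator on $\mathcal{H}$, since $w_n=\langle w,\varphi_n\rangle$. Hence $\mathcal{A}$ is a bounded perturbation of the diagonal operator $\mathrm{diag}(-A,\,1-\frac{\pi^2}{4})$. The latter generates an analytic semigroup on $\mathcal{H}$, so by bounded perturbation theory $\mathcal{A}$ also generates an analytic semigroup $e^{t\mathcal{A}}$. The same reasoning applies on the interpolation spaces: $\mathcal{H}^1=D((-\mathcal{A}+c)^{1/2})$ up to equivalent norms. Standard smoothing then gives
\[
\|e^{t\mathcal{A}}\|_{\mathcal{H}\to\mathcal{H}^1}\le Ct^{-1/2}\quad\text{for }t\in(0,1],
\]
and $e^{t\mathcal{A}}$ maps $\mathcal{H}^1$ into $D(\mathcal{A})$ for $t>0$.

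\textbf{Step 2: the nonlinearity and local existence.} In one dimension $H^1_0(0,1)\hookrightarrow L^\infty$, so $\mathcal{F}:\mathcal{H}^1\to\mathcal{H}$ is locally Lipschitz, and even $\mathcal{H}^1\to\mathcal{H}^1$ since $\psi$ is smooth. The quantitative bound is
\[
\|\mathcal{F}(Z_1)-\mathcal{F}(Z_2)\|_{\mathcal{H}}\le C\bigl(1+\|Z_1\|_{\mathcal{H}^1}^2+\|Z_2\|_{\mathcal{H}^1}^2\bigr)\|Z_1-Z_2\|_{\mathcal{H}^1}.
\]
Condition \textbf{(H)} gives $Z_0=(w_0,a_0)\in\mathcal{H}^1$ with norm controlled by $\|y_0\|_{H^1}+|a_0|$. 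I would apply the Banach fixed point theorem to the Duhamel map
\[
Z(t)=e^{(t-s)\mathcal{A}}Z_0+\int_s^t e^{(t-\tau)\mathcal{A}}\mathcal{F}(Z(\tau))\,d\tau
\]
on a ball of $C([s,s+T];\mathcal{H}^1)$. The integrable singularity $(t-\tau)^{-1/2}$ yields a contraction for $T$ small depending only on $\|Z_0\|_{\mathcal{H}^1}$, which gives $T_{\text{H}}(\|y_0\|_{H^1},|a_0|)$. Uniqueness follows from the same Lipschitz estimate together with a Gronwall (singular-kernel) inequality.

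\textbf{Step 3: regularity and blow-up alternative.} Since $\mathcal{F}(Z)$ is locally H\"older in time with values in $\mathcal{H}$, which follows from the $\mathcal{H}^1$-continuity of $Z$ and the analytic smoothing, the classical results on analytic semigroups (Pazy, Ch.~6) give $Z\in C^1((s,s+T);\mathcal{H})$ and $Z(t)\in D(\mathcal{A})$, i.e.\ $w(t)\in H^2\cap H^1_0$, for $t>s$. The blow-up criterion is the usual continuation argument: the existence time depends only on the $\mathcal{H}^1$ norm, so if $\|Z(t)\|_{\mathcal{H}^1}$ stays bounded near a finite $T_{\text{H}}$, one can restart the solution and extend it past $T_{\text{H}}$, a contradiction.

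The main point needing care is Step 1, namely the identification of $\mathcal{H}^1$ with the fractional domain of the perturbed generator and the $t^{-1/2}$ smoothing. Because the perturbation is bounded and finite-rank, this reduces to the unperturbed operator, for which $D(A^{1/2})=H^1_0$ is classical. Since the statement coincides with \cite[Lemma 5]{XZ}, the proof can simply cite it after checking that the cubic nonlinearity satisfies the local Lipschitz hypothesis used there.
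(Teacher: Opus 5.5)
Your proposal is correct, and it does more than the paper, which gives no argument for this lemma and simply invokes \cite[Lemma 5]{XZ}. You instead supply the standard self-contained proof via analytic semigroups (Pazy, Ch.~6, with $\mathcal{H}^1$ as the space $X_{1/2}$). This buys transparency in two ways. First, it shows why the local time depends only on $\|y_0\|_{H^1(0,1)}+|a_0|$, through $\|w_0\|_{H^1_0}\le\|y_0\|_{H^1}+|a_0|\,\|\psi\|_{H^1}$ and the local Lipschitz bound of the cubic term from $\mathcal{H}^1$ to $\mathcal{H}$. Second, it separates this norm-dependent local time from the maximal time, which the statement lumps together as $T_{\text{H}}$.

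A few details should be tightened.

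\begin{itemize}
\item The claim that $\mathcal{F}$ maps $\mathcal{H}^1$ into $\mathcal{H}^1$ is false, since $(\psi a+w)^3=a^3$ at $x=0$, so it is not in $H^1_0(0,1)$. It is unused, so drop it.

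\item The fractional-domain identification you flag as delicate can be bypassed. Keep the bounded finite-rank part $(w,a)\mapsto(w-\mathcal{D}(KY),KY)$ in the forcing, and use only the $t^{-1/2}$ smoothing of the diagonal semigroup $\mathrm{diag}(e^{-tA},e^{(1-\pi^2/4)t})$.

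\item The time-H\"older continuity of $\mathcal{F}(Z)$ in Step 3 does not follow from mere $\mathcal{H}^1$-continuity of $Z$. One first proves that $Z$ is H\"older continuous in $\mathcal{H}^1$ on $[s+\epsilon,s+T]$, using the boundedness of $\mathcal{F}(Z)$ in $\mathcal{H}$ and the analytic estimates; this is the standard step in Pazy's argument.

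\item To obtain $\lim=+\infty$ in the blow-up alternative, apply your uniform local existence time at the points of any sequence $t_n\to s+T_{\text{H}}$ along which $\|(w(t_n),a(t_n))\|_{\mathcal{H}^1}$ stays bounded. Merely ruling out boundedness on a whole left neighbourhood of $s+T_{\text{H}}$, as you phrase it, only gives $\limsup=+\infty$.
\end{itemize}
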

Next, using the regularity of $(w,a)$, we perform the following modal decomposition: for every $n\ge 1$, 
 \begin{equation} \label{mode-eq-heat} 
 (w_n)_t(t)+\lambda_n w_n(t)=w_n(t)+b_n KY(t)+f_n(t),\;\;\forall t\in (s,s+T_{\text{H}}),
 \end{equation}
\begin{equation}\label{mode-eq-new-heat}
 Y_t(t)=(A_0+B_0K)Y(t)+R(t),\;\;\forall t\in (s,s+T_{\text{H}}),
\end{equation}
where  $$b_n=-\langle \psi,\varphi_n\rangle=\frac{-4\sqrt{2}n}{(4n^2-1)\pi},\;\; f_n(t)=\langle -\left(\mathcal{D}a(t)+w(t)\right)^3,\varphi_n\rangle, \quad \forall n\in \mathbb{N}^+,$$
and $$A_0=\text{diag}(1-\frac{\pi^2}{4},1-\lambda_1,\cdots,1-\lambda_N),\;B_0=(1,b_1,\cdots,b_N)^\top,\;R(t)=(0,f_1(t),\cdots,f_N(t))^\top.$$
By \cite[Lemma 4]{XZ}, the pair $(A_0,B_0)$ satisfies the \textit{Kalman rank condition}\footnote{For the definition of the Kalman rank condition, we refer to \cite[Chapter 1]{Trelat}.}. \\[-3mm]

We propose the following \textit{control strategy}.

\noindent \textbf{Selection of the number of modes $N$}: Let $\lambda>2\pi^2$ be a desired decay rate. We set 
$N= \lfloor \frac{\sqrt{2\lambda+1}}{\pi}\rfloor>1$. Since $\lambda_n=(n\pi)^2$, it holds that $\lambda_N>1$ and
\begin{equation}\label{N-sel}\left\{\begin{aligned}
    &\lambda_n\le 1+ 2\lambda,\;\;\forall n\le N,\\
    &\lambda_n>1+2\lambda,\;\;\forall n>N.
\end{aligned}\right.
\end{equation}

\noindent \textbf{Selection of the controller gains $K$}: Since $ (A_0,B_0)$ satisfies the Kalman rank condition,  the pole-placement theorem (cf. \cite[Theorem 17]{Trelat}) yields a matrix $K:=[k_0,k_1,\cdots,k_N]\in \mathbb{R}^{1\times (N+1)}$ such that the characteristic polynomial $\mathcal{X} _{A_0+B_0K}(\hat{\lambda})$ of $A_0+B_0K$ is $\mathcal{X} _{A_0+B_0K}(\hat{\lambda})=\prod_{n=0}^N(\hat{\lambda}-\hat{\lambda}_n)$, where
\begin{equation} \label{hat-lambda}
\hat{\lambda}_n=-3\lambda-\frac{\lfloor \lambda\rfloor}{N}n,\;\;\forall n\;=0,1,\cdots,N.\end{equation}
By applying \textit{Ackermann's formula} (see \cite[Section 3.2]{TK}), we obtain that
\begin{equation}\label{ac-f}
	K=-e_{N+1}^\top \mathcal{C}^{-1}(A_0,B_0)\mathcal{X}_{A_0+B_0K}(A_0),
\end{equation}
where $e_{N+1}^\top=(0,\cdots,0,1)\in\mathbb{R}^{{N+1}}$, $\mathcal{C}(A_0,B_0)=[B_0,A_0B_0,\cdots,A_0^NB_0]$, and $\mathcal{X} _{A_0+B_0K}(A_0)=\prod\limits_{n=0}^N(A_0-\hat{\lambda}_nI).$

Note that $K\neq 0$ since $\lambda>2\pi^2>\lambda_1$. With $N$ chosen via \eqref{N-sel} and $K$ via \eqref{ac-f}, we construct the finite-dimensional low-frequency feedback control \eqref{con-heat}, namely the feedback law \eqref{lin-feed}.

\section{Quantitative low-frequency estimates}\label{estimate}
This section is divided into three parts. We first recall the rapid stabilization argument, which reduces the quantitative problem to estimating $|K|_2$ and $\|e^{(A_0+B_0K)t}\|_2$. We then estimate $|K|_2$ by using Ackermann's formula \eqref{ac-f}, together with the inverse Vandermonde matrix estimate \eqref{V-est}. Finally, to estimate $\|e^{(A_0+B_0K)t}\|_2$, we diagonalize $A_0+B_0K$ and choose a suitable eigenvector matrix as in \eqref{S-sec}. This allows us to derive the desired quantitative bound by using the inverse Cauchy matrix decomposition \eqref{inverse-dec}.

\subsection{Strategy of the quantitative rapid stabilization}
We first recall the frequency Lyapunov method developed in \cite{X,XZ}, which incorporates a refined decomposition of low- and high-frequency components into the construction of the Lyapunov function and balances their respective contributions via suitable low-frequency estimates. 

Let us define
\begin{equation}\label{F-DEF}F:=
	A_0+B_0 K.\end{equation} 
With $N$ and $K$ chosen as above, $F+\lambda  I$ is a Hurwitz matrix. Then, there exists a \textit{positive definite} matrix $P$ satisfying the Lyapunov equation
\begin{equation}\label{ly-eq}
	F^\top P+PF+2\lambda  P=-I.
\end{equation}
Specifically, we adopt the following Lyapunov function
\begin{equation}\label{Lyapunov}
V(t)= \gamma Y(t)^{\top} P Y(t)+\sum_{n\ge N+1} \langle w(t),\varphi_n\rangle^2,\;\;t\ge s,
\end{equation}
where $ \gamma\ge \lambda_N>1$ is a parameter to be determined later. 

Note that the Lyapunov function $V$ is composed of two terms: the term $\gamma Y^\top P Y$ accounts for the low-frequency energy, while the remaining term corresponds to the high-frequency energy. Moreover, it immediately yields the equivalence relation
\begin{equation} \label{Psim-L2}
\min\{\sigma_{\min}(P),1\}\left(a^2(t)+\|w(t)\|^2_{L^2(0,1)}\right)\le V(t)\le  \max\{\gamma\sigma_{\max}(P),1\}\left(a^2(t)+\|w(t)\|^2_{L^2(0,1)}\right).
 \end{equation}
Using the fact that $P$ satisfies the Lyapunov equation \eqref{ly-eq}, \cite{XZ} proved that there exists an explicit constant $\gamma$ such that, for sufficiently small initial datum, $V_t(t)+2\lambda V(t)\le 0$ for $t> s$. Combined with \eqref{Psim-L2}, this directly implies
$$a^2(t)+\|w(t)\|^2_{L^2(0,1)}\le \frac{\max\{\gamma\sigma_{\max}(P),1\}}{\min\{\sigma_{\min}(P),1\}}e^{-2\lambda (t-s)}\left(a^2_0+\|w_0\|^2_{L^2(0,1)}\right),\;\;\forall t\ge s.$$
Therefore, estimating $\sigma_{\max}(P)$ and $\sigma_{\min}(P)$ is necessary for quantifying the stabilization cost.

Since $P$ is the solution to \eqref{ly-eq}, we have the following explicit expression:
$$P=\int_0^{\infty}e^{(F+\lambda I)^{\top}\tau}e^{(F+\lambda I)\tau}d\tau.$$ We deduce that
\begin{equation}\label{P-max}
\begin{aligned}
		\sigma_{\max}(P)=\|P\|_2\le \int_0^{+\infty} e^{2\lambda \tau}\|e^{F\tau}\|_2^2\;d\tau.
\end{aligned}
\end{equation} 

Furthermore, the following positive-semidefinite matrix inequality holds:
\begin{equation*}
	\left(2(F+\lambda I)^{\top}P+\frac{1}{2}I\right)^{\top}\left(2(F+\lambda I)^{\top}P+\frac{1}{2}I\right)\succeq 0.
\end{equation*}
Expanding this and using \eqref{ly-eq}, we derive $P(F+\lambda I)(F+\lambda I)^{\top}P-\frac{3}{16}I\succeq 0$. For all $x\in \mathbb R^{N+1}$ with $|x|_2=1$, we have
\begin{equation*}
	\frac{1}{16}< x^\top P(F+\lambda I)(F+\lambda I)^{\top}Px\le \|F+\lambda I\|_2^2|Px|_2^2.
\end{equation*}
Consequently,
\begin{equation}\label{P-min}
\begin{aligned}
\sigma_{\min}(P)&> \frac{1}{4\|F+\lambda I\|_2}=\frac{1}{4\|A_0+B_0K+\lambda I\|_2}.
\end{aligned}
\end{equation}

Therefore, to achieve quantitative rapid stabilization, it remains to establish quantitative estimates of $|K|_2$ and $\|e^{Ft}\|_2$.

\subsection{Quantitative estimate of $|K|_2$}
By Ackermann's formula \eqref{ac-f}, we obtain
\begin{equation*}
|K|_2\le \|\mathcal{C}^{-1}(A_0,B_0)\|_2\prod\limits_{n=0}^N\|A_0-\hat{\lambda}_nI\|_2.
\end{equation*}

Let $\mu_0=1-\frac{\pi^2}{4}$ and $ \mu_n=1-\lambda_n,\forall n\in\mathbb{N}^+.$ It follows that $$\mathcal{C}(A_0,B_0)=\text{diag}(1,b_1,\cdots,b_N)\text{VdM}(\mu_0,\mu_1,\cdots,\mu_N),$$ where $\text{VdM}(\mu_0,\mu_1,\cdots,\mu_N)$ stands for the Vandermonde matrix. 
Consequently, we derive that
\begin{equation*}
|K|_2\le \|\text{diag}(1,b^{-1}_1,\cdots,b^{-1}_N)\|_2\|\text{VdM}^{-1}(\mu_0,\mu_1,\cdots,\mu_N)\|_2\prod\limits_{n=0}^N\|A_0-\hat{\lambda}_nI\|_2.
\end{equation*}

Since $b_n=\frac{-4\sqrt{2}n}{(4n^2-1)\pi}$, a direct computation gives $\|\text{diag}(1,b^{-1}_1,\cdots,b^{-1}_N)\|_2=\mathcal{O}(N)$. Moreover, given that $$-4\lambda\le \hat{\lambda}_n\le -3\lambda \quad\text{and} \quad-2\lambda\le\mu_n\le 1-\frac{\pi^2}{4}<0,\quad  \forall n=0,1,\cdots,N,$$ we obtain
$\prod\limits_{n=0}^N\|A_0-\hat{\lambda}_nI\|_2\le (4\lambda)^{N+1}$.
It remains to estimate $\|\text{VdM}^{-1}(\mu_0,\mu_1,\cdots,\mu_N)\|_2$. By the inverse Vandermonde estimate in \cite{WG}, we have
\begin{equation}\label{V-est}
\|\text{VdM}^{-1}(\mu_0,\mu_1,\cdots,\mu_N)\|_2\le (N+1)\|\text{VdM}^{-1}(\mu_0,\mu_1,\cdots,\mu_N)\|_{\infty}\le (N+1)\max_{0\le j\le N}\prod_{k\neq j}\frac{1+|\mu_k|}{|\mu_k-\mu_j|}.
\end{equation}

For $j=0$, it holds that
\begin{equation*}
 \prod_{k\neq 0}\frac{1+|\mu_k|}{|\mu_k-\mu_0|}=\prod_{k=1}^N\frac{k^2}{|k^2-\frac{1}{4}|}\le (\frac{ 4}{3})^{N}=e^{\mathcal{O}(N)}.
\end{equation*}
For $j\neq 0$, we have
\begin{equation*}
\prod_{k\neq j}\frac{1+|\mu_k|}{|\mu_k-\mu_j|}\le \frac{1}{3} \prod_{k=1,k\neq j}^N\frac{k^2}{|k^2-j^2|}< \frac{(N!)^2}{(N+j)!(N-j)!}<1.
\end{equation*}

Therefore, we obtain
\begin{equation*}
\|\text{VdM}^{-1}(\mu_0,\mu_1,\cdots,\mu_N)\|_2\le (N+1)\max_{0\le j\le N}\prod_{k\neq j}\frac{1+|\mu_k|}{|\mu_k-\mu_j|}= e^{\mathcal{O}(N)}.
\end{equation*}
Finally, we derive 
\begin{equation}\label{K-norm}
|K|_2\le (4\lambda)^{N+1}e^{\mathcal{O}(N)}=e^{\mathcal{O}(\sqrt{\lambda}\ln \lambda)}.
\end{equation}

\subsection{Quantitative estimate of $\|e^{Ft}\|_2$}

Since the eigenvalues of $F$ are distinct, $F$ is diagonalizable. Thus, there exists an \textit{invertible} matrix $S=[v_0,v_1,\cdots,v_N]$ such that
$F=S\Lambda S^{-1}$, where $v_j$ is the eigenvector of $F$ corresponding to $\hat{\lambda}_j$, and $\Lambda=\text{diag}(\hat{\lambda}_0,\hat{\lambda}_1,\cdots,\hat{\lambda}_N)$. It follows that $\|e^{Ft}\|_2\le \|S^{-1}\|_2\|S\|_2 e^{-3\lambda t}.$

From $(A_0+B_0K)v_j=\hat{\lambda}_jv_j$, we derive 
$v_j=-(A_0-\hat{\lambda}_jI)^{-1}B_0Kv_j$ for all $j=0,1,\cdots,N$. Note that $Kv_j\in \mathbb{R}$ for each $j$. This motivates the choice
$$v_j=(A_0-\hat{\lambda}_jI)^{-1}B_0\neq 0,\quad\forall j=0,1,\cdots,N.$$ The next lemma shows that, for each $j=0,1,\cdots,N$, $v_j$ is indeed an eigenvector of $F$ corresponding to $\hat{\lambda}_j$.

\begin{lemma}\label{eigvecof-F1}
For each $j=0,1,\cdots,N$, let $v_j=(A_0-\hat{\lambda}_jI)^{-1}B_0$. It holds that
$Fv_j=\hat{\lambda}_jv_j.$
\end{lemma}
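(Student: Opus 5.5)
The plan is a direct computation. It reduces the eigenvector identity to the scalar identity $Kv_j=-1$, and derives that identity from the prescribed characteristic polynomial via the matrix determinant lemma.

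\emph{Well-definedness.} First I check that $v_j$ is well defined and nonzero. The eigenvalues of $A_0$ are $\mu_0,\mu_1,\dots,\mu_N$, and these satisfy $-2\lambda\le \mu_n<0$. On the other hand, \eqref{hat-lambda} gives $\hat\lambda_j\le -3\lambda$. Hence $A_0-\hat\lambda_jI$ is an invertible diagonal matrix. Since $B_0\neq0$ (its first entry is $1$), we get $v_j\neq0$.

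\emph{Reduction to a scalar identity.} Next, writing $A_0v_j=(A_0-\hat\lambda_jI)v_j+\hat\lambda_jv_j=B_0+\hat\lambda_jv_j$, and noting that $Kv_j\in\mathbb{R}$, we have
\begin{equation*}
Fv_j=A_0v_j+B_0(Kv_j)=\hat\lambda_jv_j+\bigl(1+Kv_j\bigr)B_0 .
\end{equation*}
So it suffices to prove $1+K(A_0-\hat\lambda_jI)^{-1}B_0=0$.

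\emph{The scalar identity.} For this I will use the matrix determinant lemma $\det(I+uw^\top)=1+w^\top u$. Take any $\hat\lambda$ that is not an eigenvalue of $A_0$. Then
\begin{equation*}
\det(\hat\lambda I-A_0-B_0K)=\det(\hat\lambda I-A_0)\,\det\bigl(I-(\hat\lambda I-A_0)^{-1}B_0K\bigr)=\det(\hat\lambda I-A_0)\bigl(1-K(\hat\lambda I-A_0)^{-1}B_0\bigr).
\end{equation*}
Now set $\hat\lambda=\hat\lambda_j$. By the choice of $K$ in the pole-placement step, $\mathcal{X}_{A_0+B_0K}(\hat\lambda)=\prod_{n=0}^N(\hat\lambda-\hat\lambda_n)$, so the left-hand side vanishes. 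The factor $\det(\hat\lambda_jI-A_0)$ is nonzero by the separation already shown. Therefore $1-K(\hat\lambda_jI-A_0)^{-1}B_0=0$, which is exactly $1+Kv_j=0$, and hence $Fv_j=\hat\lambda_jv_j$.

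The only point needing care is the separation between $\{\hat\lambda_j\}$ and the spectrum of $A_0$, which is what makes the determinant factorization legitimate. This separation comes straight from \eqref{N-sel} and \eqref{hat-lambda}, so I expect no real obstacle.
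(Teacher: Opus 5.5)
Your proposal is correct and follows the paper's proof: the same reduction to $1+K(A_0-\hat\lambda_jI)^{-1}B_0=0$, followed by the determinant factorization $\det(F-\hat\lambda_jI)=\bigl(1+K(A_0-\hat\lambda_jI)^{-1}B_0\bigr)\det(A_0-\hat\lambda_jI)$. You also spell out the separation $\hat\lambda_j\le -3\lambda<-2\lambda\le\mu_n<0$, which the paper uses only implicitly when it asserts $\det(A_0-\hat\lambda_jI)\neq 0$.
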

\begin{proof}
Since $Fv_j=(A_0+B_0K)(A_0-\hat{\lambda}_jI)^{-1}B_0=B_0+B_0K(A_0-\hat{\lambda}_jI)^{-1}B_0+\hat{\lambda}_jv_j$, we only need to prove
$K(A_0-\hat{\lambda}_jI)^{-1}B_0=-1$. Indeed, we have
\begin{equation*}
\begin{aligned}
 0&=\text{det}\left(F-\hat{\lambda}_jI\right)=(1+K(A_0-\hat{\lambda}_jI)^{-1}B_0)\text{det}(A_0-\hat{\lambda}_jI).
\end{aligned}
\end{equation*}
Recalling that $\text{det}(A_0-\hat{\lambda}_jI)\neq 0$, we derive $1+K(A_0-\hat{\lambda}_jI)^{-1}B_0=0$. This completes the proof.
\end{proof}
Therefore, we can choose 
\begin{equation}\label{S-sec}
S=[(A_0-\hat{\lambda}_0I)^{-1}B_0,(A_0-\hat{\lambda}_1I)^{-1}B_0,\cdots,(A_0-\hat{\lambda}_NI)^{-1}B_0].\end{equation} Observe that $S$ admits the decomposition $$S=\text{diag}(1,b_1,\cdots,b_N)\mathcal{S},\;\;\text{ where }
\mathcal{S}=(\mathcal{S}_{kj})_{0\le k,j\le N},\;\text{and}\;\mathcal{S}_{kj}=\frac{1}{\mu_k-\hat{\lambda}_j}.$$
Hence, $\mathcal S$ is a Cauchy matrix. Using the fact that $|\mathcal{S}_{kj}|\le \frac{1}{\lambda}<1$, we derive
\begin{equation} \label{T-2}
\|S\|_2\le \sqrt{1+\|\psi\|_{L^2(0,1)}^2}\|\mathcal{S}\|_{2}< \sqrt{1+\|\psi\|_{L^2(0,1)}^2}(N+1).
\end{equation}

To estimate $\|S^{-1}\|_2$, it suffices to estimate $\|\mathcal{S}^{-1}\|_2$. Define two auxiliary quantities
$$A_j = \prod_{\substack{m=0 \\ m \neq j}}^{N} (\mu_j - \mu_m), \quad B_k = \prod_{\substack{m=0 \\ m \neq k}}^{N} (-\hat{\lambda}_k + \hat{\lambda}_m).$$
Then $\mathcal{S}^{-1}$ has the following explicit form (see, e.g., \cite{Ss})
\begin{equation}\label{inverse-dec}
(\mathcal{S}^{-1})_{kj} = \frac{\prod_{m=0}^{N} (\mu_j - \hat{\lambda}_m)(\mu_m - \hat{\lambda}_k)}{A_j B_k(\mu_j - \hat{\lambda}_k)} .
\end{equation}
Note that $|A_j|\ge 1$. Since $|\hat{\lambda}_m-\hat{\lambda}_k|\ge \frac{\lfloor\lambda\rfloor}{N}|m-k|$, we have $|B_k|^{-1}\le \left(\frac{N}{\lfloor\lambda\rfloor}\right)^N\frac{1}{k!(N-k)!}\le N^N$. Consequently, we deduce that
\begin{equation}\label{T-1-2}
\|\mathcal{S}^{-1}\|_2\le (N+1)N^N(4\lambda)^{2N+2}=e^{\mathcal{O}(\sqrt{\lambda}\ln \lambda)}.
\end{equation}
Combining this with \eqref{T-2} and $\|\text{diag}(1,b^{-1}_1,\cdots,b^{-1}_N)\|_2=\mathcal{O}(N)$ yields
\begin{equation} \label{F-NORM}
\|e^{Ft}\|_2\le e^{\mathcal{O}(\sqrt{\lambda}\ln \lambda)}e^{-3\lambda t}.
\end{equation}

From \eqref{P-max}, \eqref{P-min}, \eqref{K-norm}, and \eqref{F-NORM}, we obtain the following result.
   
\begin{proposition}[Quantitative low-frequency estimates]
Let $\lambda>2\pi^2$. For the previously defined $N$, $A_0$, and $B_0$, let $K$ be defined as in \eqref{ac-f}. Then, for the solution $P$ of the Lyapunov equation \eqref{ly-eq}, there exist $\tilde{D}_1,\tilde{D}_2>1$ such that
\begin{equation}\label{P-minmax}
   \sigma_{\max}(P)\le \sigma_1:=e^{\tilde{D}_1\sqrt{\lambda}\ln \lambda},\quad\quad \sigma_{\min}(P)\ge \sigma_2:=e^{-\tilde{D}_2\sqrt{\lambda}\ln \lambda}.
\end{equation}
\end{proposition}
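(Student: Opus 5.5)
The plan is to combine the two reductions of the previous subsection, \eqref{P-max} and \eqref{P-min}, with the two quantitative estimates \eqref{K-norm} and \eqref{F-NORM}. Throughout we use $N=\lfloor \sqrt{2\lambda+1}/\pi\rfloor=\mathcal{O}(\sqrt{\lambda})$ and $\lambda>2\pi^2$, so $\ln\lambda$ is bounded below by a positive constant. This lets us absorb polynomial factors in $\lambda$ and $N$, as well as constants, into $e^{\mathcal{O}(\sqrt\lambda\ln\lambda)}$.

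\textbf{Upper bound.} We insert \eqref{F-NORM} into \eqref{P-max}. Writing $\|e^{F\tau}\|_2\le e^{C\sqrt\lambda\ln\lambda}e^{-3\lambda\tau}$, we get
\begin{equation*}
\sigma_{\max}(P)\le e^{2C\sqrt\lambda\ln\lambda}\int_0^{\infty}e^{2\lambda\tau-6\lambda\tau}\,d\tau=\frac{e^{2C\sqrt\lambda\ln\lambda}}{4\lambda}\le e^{2C\sqrt\lambda\ln\lambda}.
\end{equation*}
Taking $\tilde D_1=\max\{2C,2\}$ gives the first inequality. The only point to check is that the constant in \eqref{F-NORM} is uniform in $t$ and in $\lambda$. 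This holds because it comes from $\|S\|_2\|S^{-1}\|_2$, and \eqref{T-2}, \eqref{T-1-2} together with the bound $\mathcal{O}(N)$ on the diagonal factor are all independent of $t$.

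\textbf{Lower bound.} By \eqref{P-min} it suffices to bound $\|A_0+B_0K+\lambda I\|_2$. The triangle inequality gives
\begin{equation*}
\|A_0+B_0K+\lambda I\|_2\le \|A_0\|_2+|B_0|_2|K|_2+\lambda.
\end{equation*}
The diagonal entries of $A_0$ satisfy $|\mu_n|\le 2\lambda$, so $\|A_0\|_2\le 2\lambda$. Also $|B_0|_2^2=1+\sum_{n\le N}b_n^2\le 1+\|\psi\|_{L^2(0,1)}^2$ by Bessel's inequality, so $|B_0|_2$ is bounded by an absolute constant. Combining these with \eqref{K-norm}, we obtain $\|F+\lambda I\|_2\le 3\lambda+Ce^{C\sqrt\lambda\ln\lambda}\le e^{C'\sqrt\lambda\ln\lambda}$. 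Then \eqref{P-min} yields
\begin{equation*}
\sigma_{\min}(P)>\tfrac14 e^{-C'\sqrt\lambda\ln\lambda}\ge e^{-\tilde D_2\sqrt\lambda\ln\lambda},
\end{equation*}
where $\tilde D_2$ is chosen large enough to absorb $\ln 4$, using again that $\sqrt\lambda\ln\lambda$ is bounded below.

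\textbf{Main obstacle.} The hard work has already been done in \eqref{K-norm} and \eqref{F-NORM}, so the remaining difficulty is bookkeeping. One must ensure that every hidden constant is independent of $\lambda$, with $N$ treated as a function of $\lambda$ alone. One must also check that all terms of the form $(4\lambda)^{N+1}$, $N^N$ and $(N+1)$ are indeed $e^{\mathcal{O}(\sqrt\lambda\ln\lambda)}$, which follows from $N\ln(4\lambda)\lesssim\sqrt\lambda\ln\lambda$ and $N\ln N\lesssim\sqrt\lambda\ln\lambda$.
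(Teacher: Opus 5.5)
Your proposal is correct and follows the paper's route. The paper obtains the proposition directly by inserting \eqref{F-NORM} into \eqref{P-max} and \eqref{K-norm} into \eqref{P-min}; you make that combination explicit, bounding $\|F+\lambda I\|_2$ through the triangle inequality and bounding $|B_0|_2$ by Bessel's inequality.
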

   
\begin{remark}
   The pole-placement theorem ensures that there exists a matrix $K $ such that 
  $$\|e^{(A_0+B_0K)t}\|_2 \le M_\lambda e^{-\lambda t},\quad \forall t\ge0.$$
   The investigation of the quantitative relationship between $M_\lambda$ and  $\lambda$ is referred to as the \textit{overshoot estimation} in pole placement. Related results can be found in \cite{J}.
\end{remark}
\section{Proof of quantitative rapid stabilization; Theorem \ref{qu-ra-sta}}\label{Stabilizatipn}

Recall the frequency Lyapunov function defined in \eqref{Lyapunov}; the proof is divided into three steps.\\[-1mm]

\noindent\textbf{Step 1. We prove the rapid stabilization of $V(t)$ over $(s,s+T_{\text{H}})$ under the compatibility condition.}\\[-2mm]

By the well-posedness result in Lemma \ref{Well-posedness-th}, the solution is sufficiently regular on $(s,s+T_{\text{H}})$ for the following computation:
\begin{equation}\label{V-along-traj-L2}
\begin{aligned}
V_t(t)+2\lambda V(t)&=   - \gamma Y^{\top}(t)Y(t)+2\gamma Y^{\top}(t)P R(t)+2 \sum_{n\ge N+1} w_{n}(t) \{(1+\lambda-\lambda_n )w_n(t)\\
&\quad+b_n KY(t)+f_n(t)\},\;\;\forall t\in (s,s+T_{\text{H}}).
\end{aligned}
\end{equation}
Using Young's inequality, for all $\alpha>0$, we have
$$\begin{aligned}
2 \sum_{n\ge N+1}w_{n}(t)f_n(t)&\le  \sum_{n\ge N+1}\{\alpha  {f_n^2(t)}+\frac{1}{\alpha  }w_{n}^2(t)\}\\[2mm]
&=\frac{1}{\alpha  } \sum_{n\ge N+1}w_{n}^2(t)+\alpha \|(\mathcal{D}a(t)+w(t))^3\|_{L^2(0,1)}^2-{\alpha  }R^{\top}(t)R(t),\;\;\forall t\in (s,s+T_{\text{H}}).
\end{aligned}$$

For every  $\varphi\in H^1(0,1)$ with $\varphi(1)=0$, we have
$$\varphi^2(x)= -2\int_x^1\varphi(z)\varphi'(z)dz.$$
By direct computations, it holds that
\begin{equation}\label{AG610}\|\varphi\|_{L^{\infty}(0,1)}\le \sqrt{2}\|\varphi\|_{L^{2}(0,1)}^{1/2}\|\varphi_x\|_{L^{2}(0,1)}^{1/2}.\end{equation}
We further have
$$ \|(\mathcal{D}a(t)+w(t))^3\|_{L^2(0,1)}^2\le 4\|(\mathcal{D}a(t)+w(t))_x\|_{L^2(0,1)}^2\|\mathcal{D}a(t)+w(t)\|^4_{L^2(0,1)}.$$

Hence, for every $t\in (s,s+T_{\text{H}})$, we obtain
\begin{equation*}
\begin{aligned}
{V}_t(t)+2\lambda V(t)&\le Y^{\top}(t)\Phi Y(t)+2\gamma Y^{\top}(t)P R(t)-{\alpha}R^{\top}(t)R(t)+\sum_{n\ge N+1}\Psi_n w_{n}^2(t)\\
&\quad +32\alpha \left(a^2(t)+\|w_x(t)\|^2_{L^2(0,1)}\right)\left(a^2(t)+\|w(t)\|^2_{L^2(0,1)}\right)^2,
\end{aligned}
\end{equation*}
where 
\begin{equation*}
\begin{aligned}
&\Phi=(-\gamma+ \beta \sum_{n\ge N+1}b_n^2|K|_2^2)I:=(-\gamma + \beta S_{N})I,\\[2mm]
&\Psi_n=2+2\lambda-2\lambda_n+\frac{1}{\beta}+\frac{1}{\alpha},\;\;\forall n\ge N+1.
\end{aligned}
\end{equation*}

Equivalently,
$$\begin{aligned}
{V}_t(t)+2\lambda V(t)&\le \begin{pmatrix}
Y(t)\\R(t)\end{pmatrix}^T\Theta\begin{pmatrix}Y(t)\\R(t)\end{pmatrix}+\sum_{n\ge N+1}\Psi_n w_{n}^2(t)\\
&\quad +32\alpha \left(a^2(t)+\|w_x(t)\|^2_{L^2(0,1)}\right)\left(a^2(t)+\|w(t)\|^2_{L^2(0,1)}\right)^2,  \end{aligned}$$
where $\Theta:=\begin{pmatrix}
\Phi &  \gamma P\\
\gamma P  & -\alpha I
\end{pmatrix}$.

Note that $S_N>0$. Set $\beta=\frac{\gamma}{4S_{N}}$. Then 
$\Phi=(-\gamma+\beta S_N)I=-\frac{3\gamma}{4}I.$ We choose $\alpha$ such that $\alpha\ge 4\gamma \sigma^2_1+\frac{\gamma}{2}.$ It follows that 
\begin{equation}\label{Phi-e1}
-\Phi-\frac{\gamma}{2}I-\frac{\gamma^2 }{\alpha-\frac{\gamma}{2}}P^2\succeq 0.
\end{equation}
By the Schur complement, \eqref{Phi-e1} implies that $-\Theta- \frac{\gamma}{2}I\succeq0$, and thus $-\Theta- \frac{\lambda_N}{2}I\succeq0$. For $\Psi_{n}$, we have
$$\Psi_{n}\le 2+2\lambda-2\lambda_{n}+\frac{4S_{N}}{\gamma}+\frac{1}{4\gamma \sigma^2_1+\frac{\gamma}{2}}\le 2+2\lambda-2\lambda_{n}+\frac{4S_{N}}{\gamma}+\frac{2}{\gamma},\quad \forall n\ge N+1.$$

Letting $\gamma>8S_N+4$, so that $\frac{4S_N}{\gamma}+\frac{2}{\gamma}<\frac12$, and using $\lambda_n>1+2\lambda$ for $n\ge N+1$, we deduce that 
$$\Psi_n+\frac{1}{2}\lambda_n\le 2+2\lambda-\frac{3}{2}\lambda_n+\frac{1}{2}\le 1-\lambda<0,\quad\forall n\ge N+1.$$
Consequently,
\begin{equation}\label{V-nes}\begin{aligned}
\begin{pmatrix}Y(t)\\R(t)\end{pmatrix}^T\Theta\begin{pmatrix}Y(t)\\R(t)
\end{pmatrix}+\sum_{n\ge N+1}\Psi_n w_{n}^2(t)&\le -\frac{\lambda_N}{2}Y^\top (t)Y(t)-\frac{1}{2}\sum_{n\ge N+1}\lambda_nw_n^2(t)\\
&\le -\frac{1}{2} \left(a^2(t)+\|w_x(t)\|_{L^2(0,1)}^2\right).\end{aligned}
\end{equation} 

Note that $S_N=\sum\limits_{n\ge N+1}b_n^2|K|_2^2\le \|\psi\|_{L^2(0,1)}^2|K|_2^2$. It follows from \eqref{K-norm} that there exists $\tilde D_3\ge 1$ such that the choice $\gamma=e^{\tilde D_3\sqrt{\lambda}\ln\lambda}$ ensures
the estimate \eqref{V-nes} and the inequality $\gamma>\max\{8S_N+4,\lambda_N\}$. Moreover, by \eqref{P-minmax}, after increasing the constant if necessary, there exists $\tilde D_4\ge 1$ such that we may take $\alpha=e^{\tilde D_4\sqrt{\lambda}\ln\lambda}$ satisfying $\alpha\ge 4\gamma\sigma_1^2+\frac{\gamma}{2}$.

Therefore, we derive
\begin{equation*}
\begin{aligned}
{V}_t(t)+2\lambda V(t)&\le \left(-\frac{1}{2}+32\alpha \left(a^2(t)+\|w(t)\|^2_{L^2(0,1)}\right)^2\right)\left(a^2(t)+\|w_x(t)\|^2_{L^2(0,1)}\right),\;\;\forall t\in (s,s+T_{\text{H}}).
\end{aligned}
\end{equation*}
Suppose that
\begin{equation} \label{pri}
\|(w(t),a(t))\|_{\mathcal{H}}\le \kappa:=\left(\frac{1}{64\alpha}\right)^{\frac{1}{4}}=2^{-\frac 3 2}e^{-\frac{\tilde D_4}{4}\sqrt{\lambda}\ln\lambda},\quad\forall t\in(s,s+T_{\text{H}}).
\end{equation}
It follows that
$$\begin{aligned}\|(w(t),a(t))\|_{\mathcal{H}}&\le \left( \frac{2\gamma\sigma_1}{\sigma_2}\right)^{\frac{1}{2}}e^{-\lambda (t-s)}\|(w_0,a_0)\|_{\mathcal{H}}\\
&\le e^{(\tilde{D}_3+\tilde{D}_1+ \tilde{D}_2/2)\sqrt{\lambda}\ln\lambda}e^{-\lambda (t-s)}\|(w_0,a_0)\|_{\mathcal{H}},\;\;\forall t\in(s,s+T_{\text{H}}).\end{aligned}$$

Let $$M_0=e^{(\tilde{D}_3+\tilde{D}_1+\tilde{D}_2 /2)\sqrt{\lambda}\ln\lambda}\quad\text{and}\quad \rho_0={{{\kappa}}/{M_0}}.$$ 
We claim that the a priori estimate \eqref{pri} holds whenever $(y_0,a_0)\in H^1(0,1)\times\mathbb{R}$ satisfies \textbf{(H)} and $\|(y_0-\mathcal{D}a_0,a_0)\|_{\mathcal{H}}\le \rho_0$. 

Indeed, we suppose that there exists $\tilde{t}\in [s,s+T_{\text{H}})$ such that $\|(w(\tilde t),a(\tilde t))\|_{\mathcal{H}}\ge \kappa$. When $t=s$, we have
$\|(w_0,a_0)\|_{\mathcal{H}}=\|(y_0-\mathcal{D}a_0,a_0)\|_{\mathcal{H}}\le \rho_0<\kappa$. Thus, we define $t^*\in(s, \tilde{t}]$ as the shortest time satisfying
$\|(w(t^*),a(t^*))\|_{\mathcal{H}}\ge\kappa$. Using the fact that $(w,a)\in C([s,s+T_{\text{H}});\mathcal{H}^1)$, we have $\|(w(t^*),a(t^*))\|_{\mathcal{H}}=\kappa$, and $\|(w(t),a(t))\|_{\mathcal{H}}<\kappa$ for any $t\in [s,t^*)$. Hence, $V(t)$ is exponentially stable over $[s,{t}^*)$. This leads to
\begin{equation*}
\begin{aligned}
\|(w(t),a(t))\|_{\mathcal{H}}&\le M_0e^{-\lambda (t-s)}\|(y_0-\mathcal{D}a_0,a_0)\|_{\mathcal{H}},\;\;\forall t\in[s,{t}^*).
\end{aligned}
\end{equation*}
Hence, using the continuity again, we obtain
\begin{equation*}
\|(w(t^*),a(t^*))\|_{\mathcal{H}}\le M_0e^{-\lambda (t^*-s)}\|(y_0-\mathcal{D}a_0,a_0)\|_{\mathcal{H}} \le e^{-\lambda (t^*-s)}\kappa<\kappa,
\end{equation*}
which contradicts the definition of $t^*$. 

Therefore, we have
\begin{equation} \label{y-est-l2}
\|(w(t),a(t))\|_{\mathcal{H}}\le M_0e^{-\lambda(t-s)}\|(y_0-\mathcal{D}a_0,a_0)\|_{\mathcal{H}},\quad\forall t\in(s,s+T_{\text{H}}).
\end{equation} Furthermore, 
\begin{equation}\label{y-est-l222}
|u(t)|\le |K|_2\|(w(t),a(t))\|_{\mathcal{H}}\le |K|_2M_0e^{-\lambda(t-s)}\|(y_0-\mathcal{D}a_0,a_0)\|_{\mathcal{H}},\quad\forall t\in(s,s+T_{\text{H}}).
\end{equation} 

\noindent\textbf{Step 2. We extend the exponentially stable solution to $[s,+\infty)$.}\\[-2mm]

 A standard multiplier argument yields
$$ \begin{aligned}
\dfrac{d}{dt}\left(\|y(t)\|^2_{H^1(0,1)}\right)&\le -2y_x(t,0)a_t(t)-\|y_{xx}(t)\|^2_{L^2(0,1)}+\|y(t)\|_{L^{\infty}}^4\|y(t)\|_{L^2(0,1)}^2+2\|y(t)\|_{L^2(0,1)}^2.
\end{aligned}$$
From \eqref{AG610} and Sobolev's embedding theorem, there exists $\tilde{c}>0$ such that
\begin{equation}\label{global-ti2}
\begin{aligned}
\dfrac{d}{dt}\left(\|y(t)\|^2_{H^1(0,1)}\right)&\le 2\tilde{c}\sqrt{\|y_{xx}(t)\|_{L^2(0,1)}^2+\|y_{x}(t)\|_{L^2(0,1)}^2}|a_t(t)|-\|y_{xx}(t)\|^2_{L^2(0,1)}\\
&\quad+4\|y_x(t)\|_{L^2(0,1)}^2\|y(t)\|_{L^2(0,1)}^4+2\|y(t)\|_{L^2(0,1)}^2.
\end{aligned}
\end{equation}
Moreover, by \eqref{y-est-l2}, \eqref{y-est-l222}, and $a_t=(1-\pi^2/4)a+u$, there exists a constant $C_\lambda>0$ independent of $T_{\text{H}}$ such that
\begin{equation}\label{global-ti68}\|y(t)\|_{L^2}+|a(t)|+|a_t(t)|
\le C_\lambda \|(y_0-\mathcal Da_0,a_0)\|_{\mathcal H},\quad\forall t\in (s,s+T_{\text{H}}).\end{equation}

Using Young's inequality in \eqref{global-ti2}, we derive 
$$\dfrac{d}{dt}\left(\|y(t)\|^2_{H^1(0,1)}\right)\le C\bigl(1+\|y(t)\|_{L^2}^4\bigr)\|y(t)\|^2_{H^1(0,1)}
+C\bigl(|a_t(t)|^2+\|y(t)\|_{L^2}^2\bigr),$$
for some $C>0$. From \eqref{global-ti68} and Gr\"onwall's inequality, we obtain that $\|y(t)\|_{H^1(0,1)}$ and $\|(w(t),a(t))\|_{\mathcal{H}^1}$ are uniformly bounded for all $t\in(s,s+T_{\text{H}})$. In view of the blow-up alternative in {Lemma \ref{Well-posedness-th}}, the solution can be extended to $[s,+\infty)$.\\[-2mm]

\noindent \textbf{Step 3. We prove the quantitative rapid stabilization with $(y_0,a_0)\in \mathcal{H}$.}\\[-2mm]

Define $G_\lambda(w,a):=\mathcal K_\lambda (w+\mathcal Da,a)$. It follows directly that $G_\lambda$ is Lipschitz continuous. According to Lemma \ref{w2}, for every $(y_0,a_0)\in \mathcal{H}$, there exists ${T}_1>0$ such that the system \eqref{controlled-eqs} admits a unique solution
$(w,a)\in C([s,s+{T}_1];\mathcal{H})\cap L^2(s,s+{T}_1;\mathcal{H}^1)$
satisfying
$$\|(w,a)\|_{C([s,s+{T}_1];\mathcal{H})}\le 2\|(w_0,a_0)\|_{\mathcal{H}}=2\|(y_0-\mathcal{D}a_0,a_0)\|_{\mathcal{H}}\le 4\|(y_0,a_0)\|_{\mathcal{H}}.$$
Let us set 
\begin{equation}\label{rhoM-57}
\rho=\frac{\rho_0}{4}\quad \text{and}\quad M= 16M_0e^{\sqrt{\lambda}}.
\end{equation}
Then, if $(y_0,a_0)\in \mathcal{H}$ satisfies
$\| (y_0,a_0)\|_{\mathcal{H}}\le \rho$, we have \begin{equation}\label{ndw67}\|(w,a)\|_{C([s,s+{T}_1];\mathcal H)}=\|(y-\mathcal Da,a)\|_{C([s,s+{T}_1];\mathcal H)}\le4\|(y_0,a_0)\|_{\mathcal{H}}\le \rho_0.\end{equation}

On one hand, for $t\in [s,s+\min\{{T}_1,\frac{1}{\sqrt{\lambda}}\}]\subset[s,s+\frac{1}{\sqrt{\lambda}}]$, we have
\begin{equation}\label{M167}
\|(y(t),a(t))\|_{\mathcal{H}}\le 2\|a\|_{C([s,s+{T}_1])}+\|w\|_{C([s,s+{T}_1];L^2(0,1))} \le 8e^{\sqrt{\lambda } }e^{-\lambda  (t-s)}\|(y_0,a_0)\|_{\mathcal{H}}.
\end{equation}
On the other hand, since $w\in L^2(s,s+{T}_1;H^1_0(0,1))$, there exists $\tilde{T}\in (s,s+\min\{{T}_1,\frac{1}{\sqrt{\lambda}}\})$ such that $w(\tilde{T})\in H_0^1(0,1)$. Hence, $(y(\tilde{T}),a(\tilde{T}))$ satisfies the compatibility condition. It follows from \eqref{ndw67} that
\begin{equation}\label{M267}\begin{aligned}\|(y(t),a(t))\|_{\mathcal{H}}\le 2\|(w(t),a(t))\|_{\mathcal{H}}&\le 2M_0e^{-\lambda (t-\tilde{T})}\|(y(\tilde{T})-\mathcal{D}a(  \tilde{T}),a (  \tilde{T}))\|_{\mathcal{H}}\\
&\le 2M_0e^{\sqrt{\lambda } }e^{-\lambda (t-s)}\|(y(\tilde{T})-\mathcal{D}a(  \tilde{T}),a (  \tilde{T}))\|_{\mathcal{H}}\\
&\le 16M_0e^{\sqrt{\lambda } }e^{-\lambda (t-s)}\|(y_0,a_0)\|_{\mathcal{H}},\;\;\forall t\ge \tilde{T}.\end{aligned}\end{equation}

Therefore, \eqref{M167} and \eqref{M267} yield
\begin{equation*}
\|(y(t),a(t))\|_{\mathcal{H}}\le Me^{-\lambda (t-s)}\|(y_0,a_0)\|_{\mathcal{H}},\;\;\;\forall t\ge s.
\end{equation*}
Moreover, it holds that
$$|u(t)|\le |K|_2Me^{-\lambda (t-s)}\|(y_0,a_0)\|_{\mathcal{H}},\;\;\forall t\ge s.$$
The quantitative estimates in Step 1 imply that there exists $D\ge 1$ such that
$$(1+|K|_2)M\le e^{D{\sqrt{\lambda}\ln\lambda}} \quad \text{and}\quad\rho\ge e^{-D{\sqrt{\lambda}\ln\lambda}}.$$
We complete the proof. \hfill$\square$
\begin{remark}\label{quant-re}
Compared with the stabilization cost $e^{\mathcal{O}(\sqrt{\lambda})}$ presented in \cite{CN,X}, our stabilization cost behaves as $e^{\mathcal{O}(\sqrt{\lambda}\ln\lambda)}$. This is because our proof relies heavily on the estimate $\lambda^N=e^{\mathcal{O}(\sqrt{\lambda}\ln\lambda)}.$  
This factor arises from the terms $|\mu_k-\hat{\lambda}_j|$, which represent the distances between low-frequency eigenvalues and pole-placement eigenvalues.

Indeed, for every $p>1$ we have $p/(p+1)>1/2$, and hence there exists a constant $C_p>0$ such
that
$\sqrt\lambda\ln\lambda\le C_p\lambda^{\frac{p}{p+1}}$ for any $ \lambda>2\pi^2.$
Consequently, after enlarging the constant if necessary, the estimates in Theorem \ref{qu-ra-sta} imply that we can set
$$\rho_{\lambda}=e^{-D_p\lambda^{\frac{p}{p+1}}},\quad
M_{\lambda}=e^{D_p\lambda^{\frac{p}{p+1}}},\quad
\|\mathcal K_\lambda\|_{\mathcal L}\le e^{D_p\lambda^{\frac{p}{p+1}}}.$$
\end{remark}
\begin{remark}
We obtain the quantitative local rapid stabilization in $\mathcal{H}$. In fact, for many nonlinear parabolic equations, it is natural to work in a \textit{higher-regularity} space (e.g., $\mathcal{H}^1$ in \cite{XZ}). To this end, we introduce the Lyapunov function
\begin{equation}\label{Lyapunov-H1}
\mathcal{V}(t)= \gamma Y(t)^{\top}P Y(t)+\sum_{n\ge N+1} \lambda_n \langle w(t),\varphi_n\rangle^2,\;\;t\ge s.
\end{equation}
Then, combining the quantitative framework developed in this paper with the argument for rapid stabilization in \cite{XZ}, we further obtain the quantitative local rapid stabilization in $\mathcal{H}^1$.
\end{remark}
\begin{remark}
For linear parabolic equations, the global well-posedness of the closed-loop system follows from standard bounded perturbation theory. Thus, regardless of whether one uses the Lyapunov function \eqref{Lyapunov} or \eqref{Lyapunov-H1}, the resulting quantitative results are global.
\end{remark}

\begin{appendices}
\titleformat{\section}
{\normalfont\bfseries} 
{{\large Appendix \thesection.}}  
{1em}
{}
\section{\large{Well-posedness results and the maximum principle}}
Consider the linear inhomogeneous heat equation
\begin{equation}\label{in-h}
 \left\{\begin{array}{ll}
        y_t(t,x)-y_{xx}(t,x)=f(t,x)&\text{for }(t,x)\in(t_1,t_2)\times(0,1),\\
        y(t,0)=\gamma(t),\quad y(t,1)=\beta(t)&\text{for }t\in(t_1,t_2),\\
        y(t_1,x)=y_0(x)&\text{for }x\in(0,1).\\
    \end{array}\right.
\end{equation}
The following two well-known results hold (see \cite{CN} and \cite{CX}, respectively).
\begin{lemma}
    \label{well-po-for-in-h}
    If $y_0\in L^2(0,1),\;\beta=\gamma=0$, and $f\in L^1(t_1,t_2;L^2(0,1))$, then \eqref{in-h} admits a unique mild solution 
    $ y\in C([t_1,t_2];L^2(0,1))\cap L^2(t_1,t_2;H^1_0(0,1)) $ satisfying
    \begin{equation}
        \label{norm-est-in-h}
        \max\Bigl\{\|y\|_{C([t_1,t_2];L^2(0,1))},  \|y\|_{L^2(t_1,t_2;H^1_0(0,1))}\Bigr\}\le \|y_0\|_{L^2(0,1)}+\|f\|_{L^1(t_1,t_2;L^2(0,1))}.
    \end{equation}
\end{lemma}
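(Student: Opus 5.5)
We would prove the statement with the classical spectral (Fourier sine series) representation of the Dirichlet heat semigroup $e^{-tA}$ and the Duhamel formula. With $\beta=\gamma=0$ the mild solution of \eqref{in-h} is
\[
y(t)=e^{-(t-t_1)A}y_0+\int_{t_1}^t e^{-(t-\tau)A}f(\tau)\,d\tau ,\qquad t\in[t_1,t_2].
\]
Uniqueness among mild solutions follows from this formula. We would check $y\in C([t_1,t_2];L^2(0,1))$ from two facts: strong continuity of the contraction semigroup $e^{-tA}$, and the dominated convergence theorem applied to the Bochner integral, using $f\in L^1(t_1,t_2;L^2(0,1))$.

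\textbf{$C([t_1,t_2];L^2(0,1))$ bound.} Since $\|e^{-tA}\|_{\mathcal L(L^2)}\le e^{-\lambda_1 t}\le 1$, we get for every $t$
\[
\|y(t)\|_{L^2}\le \|y_0\|_{L^2}+\int_{t_1}^t\|f(\tau)\|_{L^2}\,d\tau .
\]
This gives the first half of \eqref{norm-est-in-h}.

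\textbf{$L^2(t_1,t_2;H^1_0(0,1))$ bound.} We would treat the homogeneous and forced parts separately, with $\|v\|_{H^1_0}=\|v_x\|_{L^2}=\|A^{1/2}v\|_{L^2}$.
\begin{itemize}
\item For the homogeneous part, expand in the basis $\{\varphi_n\}$:
\[
\int_{t_1}^{t_2}\|A^{1/2}e^{-(t-t_1)A}y_0\|^2dt=\sum_n \lambda_n|\langle y_0,\varphi_n\rangle|^2\int_0^{t_2-t_1}e^{-2\lambda_n s}ds\le \tfrac12\|y_0\|_{L^2}^2 .
\]
\item For the forced part, fix $\tau$ and apply the same computation to the datum $f(\tau)$ on $(\tau,t_2)$. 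This gives $\|A^{1/2}e^{-(\cdot-\tau)A}f(\tau)\mathbf 1_{\{\cdot>\tau\}}\|_{L^2(t_1,t_2;L^2)}\le 2^{-1/2}\|f(\tau)\|_{L^2}$. Minkowski's integral inequality, which moves the $L^2_t$ norm inside the $\tau$-integral, then yields the bound $2^{-1/2}\|f\|_{L^1(t_1,t_2;L^2)}$.
\end{itemize}
Adding the two pieces gives the second half of \eqref{norm-est-in-h}, with constant $2^{-1/2}\le1$.

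Formally, the energy identity $\frac{d}{dt}\|y\|^2+2\|y_x\|^2=2\langle f,y\rangle$ also gives the bound, but it produces a worse constant after absorption. The Duhamel/Minkowski route keeps the constant equal to one. We would justify the computations by density: take smooth $y_0$ and $f$, for which the solution is classical, and pass to the limit using the linear estimate itself. This step is routine.

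The only step that needs care is the Minkowski step for the forced term. The naive Young-convolution estimate fails because $\|A^{1/2}e^{-sA}\|\sim s^{-1/2}$ is not square integrable near $s=0$. The fix is to use square integrability of the whole orbit, $\int_0^\infty\|A^{1/2}e^{-sA}g\|^2ds=\tfrac12\|g\|^2$, for each fixed $\tau$, and never the operator-norm bound.
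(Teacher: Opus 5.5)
Your proof is correct. The paper does not prove this lemma; it only cites \cite{CN}, and the classical argument behind such statements is the energy method. So yours is a genuinely different, self-contained route.

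\textbf{What your route buys and costs.} The Duhamel/spectral approach gives existence, uniqueness (by the definition of mild solution) and $C([t_1,t_2];L^2(0,1))$-continuity directly from the contraction semigroup. The cost is the Minkowski step, plus a closedness or mode-by-mode argument to identify $A^{1/2}$ of the Duhamel integral. That identification is harmless: by Tonelli, $\int_{t_1}^{t}\|A^{1/2}e^{-(t-\tau)A}f(\tau)\|_{L^2}\,d\tau<\infty$ for a.e.\ $t$.

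\textbf{Your remark on the energy route is inaccurate.} It does not lose the constant. Set $a=\|y_0\|_{L^2(0,1)}$, $B(t)=\int_{t_1}^{t}\|f(\tau)\|_{L^2}\,d\tau$ and $b=B(t_2)$. Integrate $\frac{d}{dt}\|y\|_{L^2}^2+2\|y_x\|_{L^2}^2=2\langle f,y\rangle$ and use only $\|y(t)\|_{L^2}\le a+B(t)$. This gives $2\int_{t_1}^{t_2}\|f(t)\|_{L^2}\|y(t)\|_{L^2}\,dt\le 2ab+b^2$, hence $\|y_x\|_{L^2(t_1,t_2;L^2(0,1))}\le 2^{-1/2}(a+b)$. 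That is the same constant as yours, with no absorption and no Minkowski issue. That route still needs Galerkin or the semigroup for existence, and a regularization to justify the identity. So the step you flag as delicate comes from your chosen route, not from the problem.

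\textbf{Norm convention.} The paper does not say whether $\|\cdot\|_{H^1_0(0,1)}$ means $\|v_x\|_{L^2}$ or the full $H^1$ norm. Your slack $2^{-1/2}$ covers both. With Poincar\'e's $\|v\|_{L^2}\le\pi^{-1}\|v_x\|_{L^2}$ on $H^1_0(0,1)$, the full norm gives the constant $\sqrt{(1+\pi^{-2})/2}<1$.
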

\begin{lemma}[Maximum principle]\label{max-for-in-h}
Let $y_0\in L^2(0,1),\;\beta,\gamma\in L^2(t_1,t_2)$, and $f\in L^2(t_1,t_2;L^2 (0,1))$ be such that
$$y_0\ge 0,\;\beta\ge 0,\;\gamma\ge 0,\;f\ge 0.$$
Then, for every $t\in [t_1,t_2]$, we have $y(t,\cdot)\ge 0.$
\end{lemma}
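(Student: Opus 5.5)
The statement to prove is Lemma \ref{max-for-in-h}, the weak maximum principle for the inhomogeneous heat equation \eqref{in-h} with nonnegative data $y_0,\beta,\gamma,f$. The plan is to reduce to smooth data by density, prove nonnegativity for smooth solutions by an energy (Stampacchia truncation) argument on the negative part, and then pass to the limit using the continuity of the solution map.

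\textbf{Step 1: regularisation.} Approximate $y_0$, $f$, $\beta$, $\gamma$ by nonnegative smooth functions.
\begin{itemize}
\item $y_0^k\in C_c^\infty(0,1)$ and $f^k\in C_c^\infty((t_1,t_2)\times(0,1))$, obtained by mollifying after cutting off near the boundary.
\item $\beta^k,\gamma^k\in C_c^\infty(t_1,t_2)$.
\end{itemize}
Mollification with a nonnegative kernel preserves positivity, and the approximations converge in $L^2(0,1)$, $L^2(t_1,t_2;L^2(0,1))$ and $L^2(t_1,t_2)$ respectively. Compact support in time makes the data compatible at $t=t_1$, so the corresponding solutions $y^k$ are classical: $C^1$ in $t$ and $H^2$ in $x$ on $(t_1,t_2]$, continuous up to the boundary.

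\textbf{Step 2: nonnegativity for smooth data.} Let $y^-=\max\{-y^k,0\}$. Since $\gamma^k,\beta^k\ge0$, the negative part $y^-$ vanishes at $x=0,1$, so $y^-(t)\in H^1_0(0,1)$. Multiply the equation by $-y^-$ and integrate over $(0,1)$, using $\partial_x y^k\,\partial_x(-y^-)=|\partial_x y^-|^2$ a.e. This gives
\begin{equation*}
\frac12\frac{d}{dt}\|y^-(t)\|_{L^2}^2+\|\partial_x y^-(t)\|_{L^2}^2=-\int_0^1 f^k y^-\,dx\le 0.
\end{equation*}
Since $y^-(t_1)=(y_0^k)^-=0$, it follows that $y^-\equiv0$, i.e. $y^k\ge0$.

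\textbf{Step 3: passage to the limit.} Write $y=\tilde y+\ell$, where $\ell(t,x)=(1-x)\gamma(t)+x\beta(t)$ is the boundary lift, and make the same decomposition $y^k=\tilde y^k+\ell^k$. For $\tilde y^k$, one would like to apply Lemma \ref{well-po-for-in-h} with right-hand side $f^k-\ell^k_t$. However, $\ell_t$ is not controlled when only $\beta,\gamma\in L^2$ are assumed, so Lemma \ref{well-po-for-in-h} cannot be used directly in this form.

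\textbf{Main obstacle and how to handle it.} This lack of control on $\ell_t$ is the step I expect to be the main obstacle. Rather than working with the lift, I would use the transposition (very weak) definition of the solution for $L^2$ boundary data: for every test function $g\in L^2(t_1,t_2;L^2(0,1))$, the pairing $\int\!\!\int y\,g$ is expressed through the adjoint solution $\phi$ of the backward problem. The adjoint solution satisfies $\phi_x(\cdot,0),\phi_x(\cdot,1)\in L^2(t_1,t_2)$ by the standard hidden-regularity estimate. Consequently the map from $(y_0,f,\gamma,\beta)$ to $y\in L^2((t_1,t_2)\times(0,1))$ is continuous, so $y^k\to y$ in $L^2$. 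Passing to a subsequence converging a.e., we get $y\ge0$ a.e.

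Finally, to get the conclusion for every $t$ and not just a.e., use the time continuity of $y$ with values in $H^{-1}$ (or in $L^2$ away from $t_1$). Nonnegativity is a closed condition in the distributional sense, so it persists at every $t\in[t_1,t_2]$. This also covers $t=t_1$, where $y(t_1)=y_0\ge0$.
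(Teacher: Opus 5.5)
Your proof is correct. The paper gives no argument for this lemma; it is quoted as well known from \cite{CX}. So your density--Stampacchia--transposition scheme is a self-contained substitute rather than a variant of a proof in the paper.

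It is worth comparing with what the paper actually needs. The lemma is used only in Substep 1 of Appendix C, for $y^{+}_{\mathcal T}-y$ and $y-y^{-}_{\mathcal T}$. There the source vanishes and the only boundary datum, $a^{\pm}_{\mathcal T}-a$, is Lipschitz. For such data the lift $y-(1-x)\gamma-x\beta$ together with Lemma \ref{well-po-for-in-h} keeps everything in $C([t_1,t_2];L^2(0,1))$, exactly as the paper does for $y^{\pm}_{\mathcal T}$. The obstacle you identify with $\ell_t$ then does not arise, and the conclusion holds literally for every $t$.

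Your transposition step is what the lemma requires as stated, with $\beta,\gamma\in L^2(t_1,t_2)$, and that generality is what it buys. At that level there is also a shorter route that avoids approximating the boundary data. For $g\ge0$, the backward adjoint solution satisfies $\phi\ge0$ by the argument of your Step 2 after time reversal, hence $\phi_x(\cdot,0)\ge0\ge\phi_x(\cdot,1)$. The transposition identity
\[
\iint y\,g=\int_0^1 y_0\,\phi(t_1)\,dx+\iint f\phi+\int_{t_1}^{t_2}\bigl(\gamma\,\phi_x(\cdot,0)-\beta\,\phi_x(\cdot,1)\bigr)\,dt
\]
then gives $y\ge0$ a.e.\ directly.

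One correction: your parenthetical ``or in $L^2$ away from $t_1$'' is false for merely $L^2$ boundary data. Take $\gamma(t)=(t_0-t)_+^{-\alpha}$ with $\frac14\le\alpha<\frac12$, $\beta=0$, $f=0$, $y_0=0$. Then $\gamma\in L^2$, $\gamma\ge0$, and the boundary heat-kernel representation gives $y(t_0,x)\sim c\,x^{-2\alpha}$ as $x\to0^+$, which is not in $L^2(0,1)$. In the stated generality $y(t)$ is only an element of $H^{-1}(0,1)$. The conclusion ``$y(t,\cdot)\ge0$ for every $t$'' must therefore be read as nonnegativity of a distribution. Rely solely on $y\in C([t_1,t_2];H^{-1}(0,1))$, as your main line does.
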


The next two lemmas follow from the arguments in Coron-Xiang \cite[Lemmas 25, 34 and 39]{CX}. We only give a sketch of the proof for Lemma \ref{w2}.

\begin{lemma}\label{w2}
Let $L>0$, and let $G:\mathcal{H}\to\mathbb{R}$ be a feedback law satisfying $G(0)=0$ and
$$|G(w_1,a_1)-G(w_2,a_2)|\le L\|(w_1-w_2,a_1-a_2)\|_{\mathcal{H}},\;\forall (w_j,a_j)\in \mathcal{H},\;j=1,2.$$
Then, for every $r>0$, there exists $T_1=T_1\left(r,L\right)>0$ such that for every $0\le t_1<t_2\le t_1+T_1$, and for every $(w_0,a_0)\in \mathcal{H}$ satisfying $\|(w_0,a_0)\|_{\mathcal H}\le r$, the system
\begin{equation}\label{we1}
\left\{\begin{array}{ll}
    w_t(t)+Aw(t)=w(t)-\mathcal{D}(G(w(t),a(t)))-(\mathcal{D}a(t)+w(t))^3\quad&\text{for}\;\;t\in (t_1,t_2),\\
    a_t(t)=(1-\frac{\pi^2}{4})a(t)+G(w(t),a(t))\quad&\text{for}\;\;t\in (t_1,t_2),\\
    w(t_1)=w_0,\;\;a(t_1)=a_0,
\end{array}\right.
\end{equation}
admits a unique solution $(w,a)\in C([t_1,t_2];\mathcal H)\cap L^2(t_1,t_2;\mathcal H^1)$ satisfying
\begin{equation}\label{es1}
\|(w,a)\|_{C([t_1,t_2];\mathcal{H})}\le 2r.
\end{equation}
Moreover, the solution can be extended to a unique maximal solution on
$[t_1,T_{\max})$. If $T_{\max}<+\infty$, then $\lim\limits_{t\to T_{\max}^-}\|(w(t),a(t))\|_{\mathcal H}=+\infty.$
\end{lemma}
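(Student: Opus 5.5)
The plan is a Banach fixed-point argument in the space $X_{t_1,t_2}:=C([t_1,t_2];\mathcal H)\cap L^2(t_1,t_2;\mathcal H^1)$, with norm
\[
\|(w,a)\|_{X}=\max\bigl\{\|(w,a)\|_{C([t_1,t_2];\mathcal H)},\|w\|_{L^2(t_1,t_2;H^1_0)}\bigr\}.
\]
We work on the closed ball $\mathcal B_{2r}$ of radius $2r$ in this norm.

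\textbf{The map $\Gamma$.} Given $(\tilde w,\tilde a)\in\mathcal B_{2r}$, set $g(t)=G(\tilde w(t),\tilde a(t))$; by the Lipschitz property and $G(0)=0$, $|g|\le L\|(\tilde w,\tilde a)\|_{\mathcal H}$. Define $a$ by the scalar ODE $a_t=(1-\frac{\pi^2}{4})a+g$, $a(t_1)=a_0$. Define $w$ as the mild solution of
\[
w_t+Aw=f,\qquad f:=\tilde w-\mathcal D g-(\mathcal D\tilde a+\tilde w)^3,\qquad w(t_1)=w_0.
\]
The cubic term is the only delicate one. Using \eqref{AG610},
\[
\|(\mathcal D\tilde a+\tilde w)^3\|_{L^2}\le C\,\|\mathcal D\tilde a+\tilde w\|_{L^2}^2\,\|\mathcal D\tilde a+\tilde w\|_{H^1},
\]
so $\|(\cdot)^3\|_{L^1(t_1,t_2;L^2)}\le C r^2\sqrt{t_2-t_1}\,\bigl(r\sqrt{t_2-t_1}+\|\tilde w\|_{L^2H^1}\bigr)\le C r^3\sqrt{T_1}$. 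The linear terms contribute $C(1+L)\,r\,T_1$ in $L^1L^2$. Lemma \ref{well-po-for-in-h} then gives
\[
\|w\|_X\le \|w_0\|_{L^2}+C(1+L)rT_1+Cr^3\sqrt{T_1},
\]
and the ODE gives $|a|\le |a_0|+C(1+L)rT_1$. Hence $\Gamma$ maps $\mathcal B_{2r}$ into itself once $T_1=T_1(r,L)$ is small.

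\textbf{Contraction.} For differences, write $u^3-v^3=(u-v)(u^2+uv+v^2)$ and bound $\|u-v\|_{L^2}\,(\|u\|_{L^\infty}^2+\|v\|_{L^\infty}^2)$ using \eqref{AG610} again. This yields a factor $Cr\sqrt{T_1}$ times the $X$-norm of the difference; the linear parts give $C(1+L)T_1$. Shrinking $T_1$ once more makes $\Gamma$ a contraction, which yields existence, uniqueness in $\mathcal B_{2r}$, and \eqref{es1}.

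\textbf{Uniqueness in the full class.} Any other solution in $C\cap L^2\mathcal H^1$ lies in some ball on a short time interval, so the same contraction estimate gives uniqueness locally, and it propagates by continuation.

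\textbf{Extension.} Since $T_1$ depends only on $(r,L)$, we can restart from $t_2$ with $r=\|(w(t_2),a(t_2))\|_{\mathcal H}$ to get a maximal solution on $[t_1,T_{\max})$. If $T_{\max}<\infty$ while $\|(w,a)\|_{\mathcal H}$ stayed bounded by some $r$ along a sequence $t_k\to T_{\max}$, restarting at a $t_k$ with $T_{\max}-t_k<T_1(r,L)$ would extend the solution past $T_{\max}$, a contradiction. Thus $\liminf_{t\to T_{\max}^-}\|(w,a)\|_{\mathcal H}=+\infty$, i.e.\ the norm blows up.

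The main obstacle is the cubic nonlinearity: it is not bounded on $L^2$, which forces the $L^2H^1$ component of the norm and the Gagliardo–Nirenberg bound \eqref{AG610} to gain the factor $\sqrt{T_1}$.
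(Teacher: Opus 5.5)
Your proof is correct. It differs from the paper's sketch in the choice of fixed-point space, and hence in where the smallness comes from.

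\textbf{The paper's route.} The paper works in $\bigl(C([t_1,t_1+T_0];L^2(0,1))\cap L^2(t_1,t_1+T_0;L^\infty(0,1))\bigr)\times C([t_1,t_1+T_0])$, with a weighted norm containing $\frac{1}{\mu}\|w\|_{L^2(L^\infty)}$. There the cubic term is bounded by $\|\mathcal Da+w\|_{L^\infty(L^2)}\|\mathcal Da+w\|^2_{L^2(L^\infty)}$, which carries no power of the time length. So smallness has to come from the weight: elements of $B_{2r}$ have $L^2(L^\infty)$-size at most $2r\mu$. Keeping the image inside this thin ball then requires Agmon's bound with its factor $T_0^{1/4}$, and $T_0\ll\mu^4$. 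The $L^2(H^1_0)$ regularity is recovered only afterwards, from Lemma \ref{well-po-for-in-h}.

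\textbf{Your route.} You put $L^2(t_1,t_2;H^1_0)$ directly into the norm and use \eqref{AG610} in the form $\|\varphi\|_{L^\infty}^2\le 2\|\varphi\|_{L^2}\|\varphi_x\|_{L^2}$. This is linear in $\|\varphi_x\|_{L^2}$, so Cauchy--Schwarz in time produces the factor $\sqrt{T_1}$ even though the $L^2(H^1_0)$ radius $2r$ is not small.

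\textbf{What each buys.} Your version is more economical: one small parameter instead of two. It also builds the $L^2\mathcal H^1$ regularity into the fixed point. In addition, your short-time argument gives uniqueness in the whole class $C([t_1,t_2];\mathcal H)\cap L^2(t_1,t_2;\mathcal H^1)$, whereas the paper's fixed point gives uniqueness only within its ball $B_{2r}$. The paper's choice, in turn, needs only $L^2(L^\infty)$ control of the iterates, a weaker topology, and it follows the Coron--Xiang template the paper also invokes for Lemma \ref{w1}.

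\textbf{A small slip.} The cubic contribution to the contraction constant is $Cr^2\sqrt{T_1}$, not $Cr\sqrt{T_1}$, because
$\int_{t_1}^{t_2}\|u\|_{L^\infty}^2\,dt\le 2\|u\|_{C(L^2)}\sqrt{T_1}\,\|u_x\|_{L^2(L^2)}\lesssim r^2\sqrt{T_1}$.
This is harmless, since $T_1$ is allowed to depend on $r$.
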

\begin{proof}    
Let us define $X=\Bigl(C([t_1,t_1+T_0];L^2(0,1))\cap L^2(t_1,t_1+T_0;L^{\infty}(0,1))\Bigr)\times  C([t_1,t_1+T_0])$, where $T_0$ will be chosen later. For $\mu>0$, we introduce the ${\mu}$-norm on $X$ by
$$\|(w,a)\|_{{\mu}}:=\|w\|_{C([t_1,t_1+T_0];L^2(0,1))}+\frac{1}{\mu}\|w\|_{L^2(t_1,t_1+T_0;L^{\infty}(0,1))}+\|a\|_{ C([t_1,t_1+T_0])},\;\;\forall (w,a)\in X.$$
We consider the mapping $\Gamma:X\to X$, where $\Gamma((w,a))$ is the unique mild solution $(z,b)$ of
\begin{equation}\label{fix1}
\left\{\begin{array}{ll}
    z_t(t)+Az(t)=w(t)-\mathcal{D}(G(w(t),a(t)))-(\mathcal{D}a(t)+w(t))^3\quad&\text{for}\;\;t\in (t_1,t_1+T_0),\\
    b_t(t)=(1-\frac{\pi^2}{4})a(t)+G(w(t),a(t))\quad&\text{for}\;\;t\in (t_1,t_1+T_0),\\
    z(t_1)=w_0,\;\;b(t_1)=a_0.
\end{array}\right.
\end{equation}
    
 Choose the ball $B_{2r}:=\{(w,a)\in X:\|(w,a)\|_{\mu}\le 2r\}$. According to Agmon's inequality, there exists $C>0$ such that for every $z\in C([t_1,t_1+T_0];L^2(0,1))\cap L^2(t_1,t_1+T_0;H^1_0(0,1))$,
 \begin{equation*}
    \|z\|_{L^2(t_1,t_1+T_0;L^{\infty}(0,1))}\le CT_0^{\frac{1}{4}}\|z\|^{1/2}_{L^2(t_1,t_1+T_0;H^1_0(0,1))}\|z\|^{1/2}_{C([t_1,t_1+T_0];L^2(0,1))}.
\end{equation*}
Combining this estimate with Lemma \ref{well-po-for-in-h}, and following the fixed-point argument in \cite{CX}, we use the nonlinear estimate
\begin{equation}\label{f-est-69}\|(\mathcal{D}a+w)^3\|_{L^1(t_1,t_1+T_0;L^{2}(0,1))}\le \|\mathcal{D}a+w\|_{L^{\infty}(t_1,t_1+T_0;L^2(0,1))}\|\mathcal{D}a+w\|_{L^2(t_1,t_1+T_0;L^{\infty}(0,1))}^2\end{equation}
together with the corresponding Lipschitz estimate for the difference of the nonlinear terms, to show that, provided $\mu>0$ and $T_0>0$ are sufficiently small, the mapping $\Gamma$ maps $B_{2r}$ into itself and is a contraction on $B_{2r}$. By the Banach fixed-point theorem, there exists a unique solution in $B_{2r}$. Finally, $w\in L^2(t_1,t_2;H^1_0(0,1))$ follows from Lemma \ref{well-po-for-in-h}.
\end{proof}
\begin{lemma}\label{w1}
Let $L>0$ and let $g\in L^{\infty}(\mathbb{R}^+) $ such that $\|g\|_{ L^{\infty}(\mathbb{R}^+) }\le L$. Then, for every $r>0$, there exists $T_2=T_2\left(r,L\right)>0$ such that for every $0\le t_1<t_2\le t_1+T_2$, and for every $(w_0,a_0)\in \mathcal{H}$ satisfying $\|(w_0,a_0)\|_{\mathcal{H}}\le r$, the system
\begin{equation}\label{we2}
 \left\{\begin{array}{ll}
    w_t(t)+Aw(t)=w(t)-\mathcal{D}(g(t))-(\mathcal{D}a(t)+w(t))^3\quad&\text{for}\;\;t\in (t_1,t_2),\\
    a_t(t)=(1-\frac{\pi^2}{4})a(t)+g(t)\quad&\text{for}\;\;t\in (t_1,t_2),\\
    w(t_1)=w_0,\;\;a(t_1)=a_0,
\end{array}\right.
\end{equation}
admits a unique solution $(w,a)\in C([t_1,t_2];\mathcal H)\cap L^2(t_1,t_2;\mathcal H^1)$ satisfying
\begin{equation}\label{es2}
\|(w,a)\|_{L^{\infty}(t_1,t_2;\mathcal{H})}\le 2 r.
\end{equation}
Moreover, the solution can be extended to a unique maximal solution on
$[t_1,T_{\max}')$. If $T_{\max}'<+\infty$, then $\lim\limits_{t\to T_{\max}'^-}\|(w(t),a(t))\|_{\mathcal H}=+\infty.$
\end{lemma}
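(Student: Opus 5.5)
We need to prove Lemma \ref{w1}: local well-posedness with an open-loop bounded control $g$. I would repeat the fixed-point argument from the proof of Lemma \ref{w2}, which is in fact simpler here because the control enters as a given source rather than through a feedback.

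\emph{Function space and map.} Fix $r>0$ and $t_1\ge 0$. With $T_0>0$ to be chosen, set
\[
X=\Bigl(C([t_1,t_1+T_0];L^2(0,1))\cap L^2(t_1,t_1+T_0;L^\infty(0,1))\Bigr)\times C([t_1,t_1+T_0]),
\]
equipped with the $\mu$-norm $\|(w,a)\|_\mu$ used in Lemma \ref{w2}. Define $\Gamma(w,a)=(z,b)$ as follows. The component $b$ is the explicit solution of $b_t=(1-\frac{\pi^2}{4})a+g$ with $b(t_1)=a_0$, so that
\[
|b(t)|\le |a_0|+T_0\Bigl(\bigl|1-\tfrac{\pi^2}{4}\bigr|\,\|a\|_\infty+L\Bigr).
\]
The component $z$ is the mild solution of $z_t+Az=w-\mathcal Dg-(\mathcal Da+w)^3$ with $z(t_1)=w_0$. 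Lemma \ref{well-po-for-in-h} applies because the source lies in $L^1(L^2)$. Its $L^1(L^2)$ norm is bounded by $T_0\|w\|_{C(L^2)}+T_0L\|\psi\|_{L^2}$ plus the cubic term, and the cubic term is controlled through \eqref{f-est-69} by $\|\mathcal Da+w\|_{L^\infty(L^2)}\|\mathcal Da+w\|^2_{L^2(L^\infty)}$. The quantity $\|\mathcal Da\|_{L^2(L^\infty)}\le T_0^{1/2}\|a\|_\infty$ is small for small $T_0$.

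\emph{Invariance and contraction.} I would estimate the $L^2(L^\infty)$ norm of $z$ by the Agmon-type bound $CT_0^{1/4}\|z\|^{1/2}_{L^2(H^1_0)}\|z\|^{1/2}_{C(L^2)}$. Combined with \eqref{norm-est-in-h}, this gives a bound of the form $C T_0^{1/4}(r+\text{small})$. The components of $\|\Gamma(w,a)\|_\mu$ then satisfy
\[
\|z\|_{C(L^2)}\le r+CT_0(r+L)+C(r)\bigl(\mu^2+T_0\bigr)
\]
and
\[
\mu^{-1}\|z\|_{L^2(L^\infty)}\le \mu^{-1}CT_0^{1/4}(\cdots).
\]
The order of choices matters. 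I would first take $\mu=\mu(r)$ small so that the cubic contribution (of order $r\mu^2r^2$) is at most $r/4$, and then take $T_0=T_0(r,L,\mu)$ small so that every other excess is below $r/4$. Since $w_0$ and $a_0$ each have norm at most $r$, this arrangement keeps the total norm $\|\Gamma(w,a)\|_\mu$ within $2r$, so $\Gamma$ maps $B_{2r}$ into itself.

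For the contraction step, the control $g$ cancels in differences. The difference of the cubic terms factors as $(u_1-u_2)(u_1^2+u_1u_2+u_2^2)$ and is bounded in the same way. This makes $\Gamma$ a contraction on $B_{2r}$ after possibly shrinking $\mu$ and $T_0$ further. The Banach fixed-point theorem gives a unique solution in $B_{2r}$, and Lemma \ref{well-po-for-in-h} shows that $w\in L^2(H^1_0)$. Since $\|w\|_{C(L^2)}+\|a\|_\infty$ is at most the $\mu$-norm, \eqref{es2} follows. Setting $T_2:=T_0$, which depends only on $(r,L)$, completes this part, and it works for every $t_1$ because $\|g\|_\infty\le L$ holds uniformly in time.

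\emph{Maximal solution.} Uniqueness in the full class $C(\mathcal H)\cap L^2(\mathcal H^1)$, and not only in the ball, follows from a standard continuity argument: on a short interval any solution stays inside the ball. Because $T_2$ depends only on the size of the datum, the usual continuation argument applies. If $\|(w,a)\|_{\mathcal H}$ stayed bounded by some $r'$ near a finite $T'_{\max}$, restarting from a time within $T_2(r',L)$ of $T'_{\max}$ would extend the solution past $T'_{\max}$, a contradiction. The main obstacle is the bookkeeping in the invariance step: the cubic term must be absorbed through the smallness of $\mu$ before fixing $T_0$, and this is exactly what the weighted $\mu$-norm is designed to accomplish.
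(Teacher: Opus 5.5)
Your proposal is correct and follows essentially the paper's route: the paper gives no separate proof of Lemma \ref{w1}, deferring to the Coron--Xiang arguments and to the fixed-point sketch for Lemma \ref{w2}, and that sketch is the $\mu$-weighted contraction on $B_{2r}$ you adapt (here $g$ cancels in differences, so the argument is simpler). Two small corrections follow. First, invariance of $B_{2r}$ needs $\|w_0\|_{L^2(0,1)}+|a_0|=\|(w_0,a_0)\|_{\mathcal H}\le r$, because the $\mathcal H$-norm is a sum; knowing only that each term is at most $r$ lets the base term reach $2r$, and the positive excesses then push you outside the ball. Second, your restart argument needs only a single time near $T'_{\max}$ at which the norm is at most $r'$, so it gives the full limit $+\infty$ and not merely unboundedness.
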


\section{\large{Proof of the null controllability with control costs; Corollary \ref{main-th-null-control}}}
This proof strategy, which uses stabilization to obtain null controllability, was first introduced in \cite{CN} and later extended in \cite{CX, X} to obtain sharp control costs.
According to Remark \ref{quant-re}, we set
$$\rho_{\lambda}=e^{-D_p\lambda^{\frac{p}{p+1}}}, \quad M_{\lambda}=e^{D_p\lambda^{\frac{p}{p+1}}}.$$ 

Define the following quantities:
\begin{equation*}\left\{  \begin{aligned}
& {T}_0 = 0,\;\;{T}_n = {T}_{n-1 } + \frac{T}{2^{n }},\;\;\forall n \in \mathbb{N}^+,\\
&Q=2^{p+2}D_p\quad\text{and}\quad Q_p=\frac{Q^{p+1}(2^{p+1}-1)}{2^{p+2}(2^p-1)},\\
&R_p= e^{-\frac {Q_p}{T^p}}\quad\text{and}\quad\ell_n = \frac{Q^{p+1}  2^{(p+1)n}}{T^{p+1}},\;\;\forall n \in \mathbb{N}. \end{aligned}\right.
\end{equation*}
For $t\in({T}_{n}, {T}_{n+1}]$, we construct the feedback control \eqref{con-heat} corresponding to $\ell_n$, that is,
\begin{equation}\label{c-1}
u(t)=U_p(t,(y(t),a(t))):=\mathcal{K}_{\ell_n}(y(t),a(t)),\;\;\;\forall t\in (T_n,T_{n+1}].
\end{equation}

Note that $R_p<  e^{-D_p\ell_0^{\frac{p}{p+1}}}=\rho_{\ell_0}$. Theorem \ref{qu-ra-sta} gives rapid stabilization on $({T}_{0}, {T}_{1}]$. Hence, $(y,a)\in C([0,T_1];\mathcal H)$ and
\begin{equation*}
\|(y(T_1),a(T_1))\|_{\mathcal{H}}\le e^{D_p\ell_0^{\frac{p}{p+1}}-\ell_0\frac{T}{2}} \|(y_0,a_0)\|_{\mathcal{H}}\le e^{\frac{Q^{p+1}}{T^p}\frac{1-2^{p+1}}{2^{p+2}}} \|(y_0,a_0)\|_{\mathcal{H}}=e^{-\frac{Q_p(2^p-1)}{T^p}} \|(y_0,a_0)\|_{\mathcal{H}}.
\end{equation*}
Similarly, we obtain that
$$\|(y(T_n),a(T_n))\|_{\mathcal{H}}\le \prod_{k=0}^{n-1}e^{-2^{kp}\frac{Q_p(2^p-1)}{T^p}}\|(y_0,a_0)\|_{\mathcal{H}}=e^{-\frac{Q_p(2^{np}-1)}{T^p}}\|(y_0,a_0)\|_{\mathcal{H}},\quad\forall n\in \mathbb{N}.$$
Hence, $\|(y(T_n),a(T_n))\|_{\mathcal{H}}$ satisfies
\begin{equation}\label{yt69000}
\|(y(T_n),a(T_n))\|_{\mathcal{H}}\le e^{-\frac{Q_p(2^{np}-1)}{T^p}}R_p= e^{-2^{np}\frac{Q_p}{T^p}}< e^{-D_p{\ell_n}^{\frac{p}{p+1}}}=\rho_{\ell_n},\quad\forall n\in \mathbb{N},\end{equation}
which ensures the validity of the exponential decay estimates above and $(y,a)\in C([T_n,T_{n+1}];\mathcal H)$. Therefore, we have $(y,a)\in C([0,T);\mathcal H)$

Moreover, for $t\in (T_n,T_{n+1}],n\ge 1$, we have
\begin{equation}\label{yt690002}\|(y(t),a(t))\|_{\mathcal{H}}\le
e^{D_p\ell_n^{\frac{p}{p+1}}}e^{-\frac{Q_p(2^{np}-1)}{T^p}}
\|(y_0,a_0)\|_{\mathcal{H}}\le e^{-\frac{Q_p(2^{(n-1)p}-1)}{T^p}}\|(y_0,a_0)\|_{\mathcal{H}}.\end{equation}
Letting $n\to\infty$ and using $\lim\limits_{n\to+\infty}T_n=T$, we conclude from \eqref{yt69000} and \eqref{yt690002} that $\lim\limits_{t\to T^-}\|(y(t),a(t))\|_{\mathcal{H}}=0$. 

We now estimate the corresponding control cost. For every $n\ge 1$ and $t\in(T_{n},T_{n+1}]$, the quantitative bound on the feedback law gives
$$|u(t)|\le e^{D_p\ell_{n}^{\frac{p}{p+1}}}\|(y(T_{n}),a(T_{n}))\|_{\mathcal{H}}\le e^{-\frac{Q_p(2^{(n-1)p}-1)}{T^p}}\|(y_0,a_0)\|_{\mathcal{H}}\le e^{\frac{Q_p}{T^p}}\|(y_0,a_0)\|_{\mathcal{H}}.$$
Similarly, for $t\in(T_0,T_1]$,
$$|u(t)|\le e^{D_p\ell_{0}^{\frac{p}{p+1}}}\|(y_0,a_0)\|_{\mathcal{H}}\le e^{\frac{Q^{p+1}}{2^{p+2}T^p}}\|(y_0,a_0)\|_{\mathcal{H}}\le e^{\frac{Q_p}{T^p}}\|(y_0,a_0)\|_{\mathcal{H}}.$$
This completes the proof. $\hfill\square$

\section{\large{Proof of finite-time stabilization; Corollary \ref{f-t-s}}}

We emphasize that the uniform stability condition constitutes the essential distinction between finite-time null controllability and finite-time stabilization. With this additional condition, the control constructed in Corollary \ref{main-th-null-control} is no longer valid. This is because, if we set $t'=T_n$, then $\|\Upsilon(t,T_n;(y_0,a_0))\|_{\mathcal{H}}$ is bounded by $e^{D\sqrt{\ell_n}\ln \ell_n}\|(y_0,a_0)\|_{\mathcal{H}}$. Consequently, for sufficiently large $n$, we cannot obtain uniform stability at $t=T_{n+1}$. To overcome this difficulty, we employ a \textit{cut-off technique}. \\[-2mm] 

For $r>0$, we introduce a cut-off function $\chi_r\in C^{\infty}(\mathbb{R};[0,1])$ such that
$$\chi_r(x)=1\;\;\text{for }\;x\in [0,r],\quad\;\;\;\chi_r(x)=0\;\;\text{for }\;x\in [2r,+\infty).$$
Furthermore, we define an operator $\mathcal{F}_r:\mathbb{R}\to\mathbb{R}$ by
$$\mathcal{F}_r (u)=u\chi_r(|u|),\;\;\forall u\in\mathbb{R}.$$
$\mathcal{F}_r$ is Lipschitz with Lipschitz constant $1+2r\|\chi_r'\|_{C([0,2r])}$.

By Remark \ref{quant-re}, taking $p=2$ therein, we may choose a constant $D'\ge 1$ such that, for every $\ell>2\pi^2$,
$$\rho_{\ell}=e^{-D'\ell^{\frac{2}{3}}}\le 1/2, \;\;\; M_{\ell}=e^{D'\ell^{\frac{2}{3}}},\;\;\; \|\mathcal K_{\ell}\|_{\mathcal L}\le e^{{D}'\ell^{\frac{2}{3}}}.$$ 
Recalling the control designed in Corollary \ref{main-th-null-control}, we instead use the following control:
\begin{equation}\label{c-2}
u(t)=U(t,(y(t),a(t))):=\mathcal{F}_{\rho_{\ell_n}}(\mathcal{K}_{\ell_n}(y(t),a(t))),\;\;\;\forall t\in (T_n,T_{n+1}],
\end{equation}
and extend it periodically with the period $T$.

Then, on each interval $t\in (T_n,T_{n+1}]$, we have
\begin{equation}\label{F-boun}
|\mathcal{F}_{\rho_{\ell_n}}(\mathcal{K}_{\ell_n}(y,a))|\le \min\{1,\sqrt{2\|(y,a)\|_{\mathcal{H}}}\}.
\end{equation}
Indeed, if $|\mathcal{K}_{\ell_n}(y,a)|\le 2\rho_{\ell_n}$, the operator norm $\|\mathcal{K}_{\ell_n}\|_{\mathcal{L}}\le \rho_{\ell_n}^{-1}$ gives
$$|\mathcal{F}_{\rho_{\ell_n}}(\mathcal{K}_{\ell_n}(y,a))|\le|\mathcal{K}_{\ell_n}(y,a)|\le \sqrt{2\rho_{\ell_n}|\mathcal{K}_{\ell_n}(y,a)|}\le\sqrt{2\|(y,a)\|_{\mathcal{H}}}.$$
Moreover,
$|\mathcal{F}_{\rho_{\ell_n}}(\mathcal{K}_{\ell_n}(y,a))|\le|\mathcal{K}_{\ell_n}(y,a)|\le {2\rho_{\ell_n}}\le 1$.
If $|\mathcal{K}_{\ell_n}(y,a)|> 2\rho_{\ell_n}$, then by definition, we obtain that $\mathcal{F}_{\rho_{\ell_n}}(\mathcal{K}_{\ell_n}(y,a))=0$. 
\\[-1mm]

\noindent \textit{Proof of Corollary \ref{f-t-s}.} The proof proceeds in three steps. First, we prove the quantitative rapid stabilization with the nonlinear feedback law $u(t)=\mathcal{F}_{\rho_{\ell}}(\mathcal{K}_{\ell}(y(t),a(t)))$. Next, as in the proof of Corollary \ref{main-th-null-control}, we show that the system is locally null controllable with piecewise nonlinear feedback laws. Finally, we prove finite-time stabilization using the well-posedness property.
\hfill \\[-2mm] 

\noindent \textbf{Step 1. Quantitative rapid stabilization with $u(t)=\mathcal{F}_{\rho_{\ell}}(\mathcal{K}_{\ell}(y(t),a(t)))$.}

\begin{proposition}\label{qu-ra-sta-ne}
Let $\ell>2\pi^2$. For all $(y_0,a_0)\in \mathcal{H}$ satisfying $\|(y_0,a_0)\|_{\mathcal{H}}<\rho_{\ell}^3=e^{-3D'{\ell}^{\frac{2}{3}}}$, the Cauchy problem \eqref{heat-sys} with $u(t)=\mathcal{F}_{\rho_{\ell}}(\mathcal{K}_{\ell}(y(t),a(t)))$
has a unique solution $(y,a)\in C([s,+\infty);\mathcal{H})$ satisfying 
\begin{equation}\label{exp-s-heat2}
|u(t)|+	\|(y(t),a(t))\|_{\mathcal{H}}\le e^{D'{\ell}^{2/3}}e^{-\ell (t-s)}\|(y_0,a_0)\|_{\mathcal{H}},\;\;\forall t\ge s.
\end{equation}
\end{proposition}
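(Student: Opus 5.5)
The plan is to show that, for data this small, the cut-off never acts. Then the closed loop coincides with the linear-feedback closed loop of Theorem \ref{qu-ra-sta} (with $\lambda=\ell$), and uniqueness is inherited. The nonlinear feedback $G(w,a):=\mathcal{F}_{\rho_\ell}(\mathcal{K}_\ell(w+\mathcal{D}a,a))$ is globally Lipschitz, being a composition of the Lipschitz map $\mathcal{F}_{\rho_\ell}$ with a bounded linear map, and $G(0)=0$. Hence Lemma \ref{w2} gives a unique maximal mild solution $(w,a)\in C([s,T_{\max});\mathcal H)$ of \eqref{we1}, together with the blow-up alternative.

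In parallel, let $(\tilde y,\tilde a)$ be the global solution of \eqref{heat-sys} with the linear feedback $u=\mathcal{K}_\ell(\tilde y,\tilde a)$, with the same data. It exists because $\|(y_0,a_0)\|_{\mathcal H}<\rho_\ell^3\le\rho_\ell$, using $\rho_\ell\le 1/2<1$ and the version of Theorem \ref{qu-ra-sta} with constants $\rho_\ell=e^{-D'\ell^{2/3}}$ and $M_\ell=e^{D'\ell^{2/3}}$ from Remark \ref{quant-re}. Estimate \eqref{exp-s-heat} then yields, for all $t\ge s$,
\begin{equation*}
|\mathcal{K}_\ell(\tilde y(t),\tilde a(t))|\le M_\ell e^{-\ell(t-s)}\|(y_0,a_0)\|_{\mathcal H}< \rho_\ell^{-1}\rho_\ell^3=\rho_\ell^2\le \rho_\ell .
\end{equation*}
Since $\chi_{\rho_\ell}\equiv1$ on $[0,\rho_\ell]$, we get $\mathcal{F}_{\rho_\ell}(\mathcal{K}_\ell(\tilde y,\tilde a))=\mathcal{K}_\ell(\tilde y,\tilde a)$ for all $t\ge s$. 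Therefore $(\tilde y,\tilde a)$ is a mild solution of the closed loop with the nonlinear feedback, on every bounded interval.

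By the uniqueness part of Lemma \ref{w2}, applied on successive short intervals in the class $C([t_1,t_2];\mathcal H)\cap L^2(t_1,t_2;\mathcal H^1)$, which $(\tilde y-\mathcal D\tilde a,\tilde a)$ belongs to by Theorem \ref{qu-ra-sta}, the maximal solution coincides with $(\tilde y,\tilde a)$. Thus $T_{\max}=+\infty$, the solution lies in $C([s,+\infty);\mathcal H)$, and $u(t)=\mathcal K_\ell(y(t),a(t))$. The estimate \eqref{exp-s-heat2} is then exactly \eqref{exp-s-heat} for $\lambda=\ell$ in the $p=2$ form of Remark \ref{quant-re}, where $D'$ was chosen so that the bound $|u|+\|(y,a)\|_{\mathcal H}\le M_\ell e^{-\ell(t-s)}\|(y_0,a_0)\|_{\mathcal H}$ holds.

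The main obstacle is the uniqueness transfer. Uniqueness in Lemma \ref{w2} is stated within a ball of radius $2r$ on a short interval of length $T_1(r,L)$, where $L$ is the Lipschitz constant of $G$. I would fix $r:=\|(y_0,a_0)\|_{\mathcal H}$ times a constant, which bounds both solutions uniformly in time by the exponential estimate, and then iterate on intervals of fixed length $T_1$. This is routine but must be stated. The sharper threshold $\rho_\ell^3$, rather than $\rho_\ell^2$, leaves margin for the factor $2$ relating $\|(w,a)\|_{\mathcal H}$ and $\|(y,a)\|_{\mathcal H}$ in this step.
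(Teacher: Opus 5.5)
Your proof is correct, but it runs the logic in the opposite direction from the paper.

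\textbf{The paper's route.} It starts from the maximal solution of the cut-off closed loop given by Lemma \ref{w2} and runs a first-exit-time argument. While $\|(y,a)\|_{\mathcal H}<\rho_\ell^2$, the operator bound $\|\mathcal K_\ell\|_{\mathcal L}\le\rho_\ell^{-1}$ gives $|\mathcal K_\ell(y,a)|<\rho_\ell$, so the cut-off is inactive. The trajectory is then a linear-feedback trajectory on $[s,s')$. The decay estimate with $M_\ell=\rho_\ell^{-1}$ keeps the state strictly below $\rho_\ell^{-1}\rho_\ell^3=\rho_\ell^2$, which contradicts the definition of $s'$. The blow-up alternative then gives $T_{\max}=+\infty$.

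\textbf{Your route.} You take the global linear-feedback trajectory from Theorem \ref{qu-ra-sta}. From the $|u(t)|$ part of \eqref{exp-s-heat} you read off that the cut-off never acts on it, so it is already a global solution of the cut-off system. You conclude by uniqueness for that system.

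\textbf{Comparison.} Your route removes the continuity argument altogether. It relies on uniqueness for the cut-off feedback, which Lemma \ref{w2} provides because $\mathcal F_{\rho_\ell}\circ\mathcal K_\ell$ is globally Lipschitz and vanishes at $0$. The paper instead tacitly relies on uniqueness for the linear feedback, to identify the two trajectories on $[s,s')$. The paper's bootstrap template has one mild advantage: in principle it needs only an a priori estimate on intervals where the feedback is uncut, not a globally defined comparison solution. Yours is shorter.

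\textbf{Correction to your closing remark.} Your argument needs only $\|(y_0,a_0)\|_{\mathcal H}\le\rho_\ell^2$. Bounding the control directly gives $|\mathcal K_\ell(\tilde y,\tilde a)|\le\rho_\ell^{-1}\rho_\ell^2=\rho_\ell$, and $\chi_{\rho_\ell}\equiv 1$ on the closed interval $[0,\rho_\ell]$. The cube in the paper comes from its two-step bound: first the state is kept below $\rho_\ell^2$, then $\|\mathcal K_\ell\|_{\mathcal L}\le\rho_\ell^{-1}$ is applied. It does not come from the factor $2$ between $\|(w,a)\|_{\mathcal H}$ and $\|(y,a)\|_{\mathcal H}$.

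\textbf{On your uniqueness iteration.} The radius $r$ in Lemma \ref{w2} is arbitrary, so no smallness margin is needed there. Also, only the linear trajectory needs the uniform-in-time bound. The maximal solution inherits that bound interval by interval, as the two are shown to coincide.
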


\begin{proof}
Define
\begin{equation}\label{Gfeed-new}
\mathcal G_\ell(w,a):=\mathcal F_{\rho_\ell}\mathcal K_\ell(w+\mathcal Da,a).
\end{equation}
Since $\mathcal{F}_{\rho_{\ell}}\mathcal{K}_{\ell}$ is Lipschitz continuous with Lipschitz constant $(1+2{\rho_{\ell}}\|\chi_{\rho_{\ell}}'\|_{C([0,2\rho_{\ell}])})\rho_{\ell}^{-1}$, $\mathcal G_\ell$ is also Lipschitz continuous. Hence, Lemma \ref{w2} implies that the system \eqref{we1}, with the feedback law $\mathcal G_\ell$, admits a unique solution $(w,a)$ on $[s,T_{\max})$.

Set $w=y-\mathcal D a$. It is straightforward to verify that, under this change of variables, systems \eqref{heat-sys} and \eqref{we1} are equivalent. Hence, \eqref{heat-sys} with the feedback law $\mathcal{F}_{\rho_{\ell}}\mathcal{K}_{\ell}$ also admits a unique solution $(y,a)$ on $[s,T_{\max})$. We then show that the solution satisfies
\begin{equation}\label{eq}
\mathcal{F}_{\rho_{\ell}}(\mathcal{K}_{\ell}(y(t),a(t)))=\mathcal{K}_{\ell}(y(t),a(t)),\;\;\forall  t\in [s,T_{\max}).
\end{equation}
Fix $(y_0,a_0)\in \mathcal{H}$ such that
$\| (y_0,a_0)\|_{\mathcal{H}}<\rho_{\ell}^3$. We first claim that $\| (y(t),a(t))\|_{\mathcal{H}}< \rho_{\ell}^2$ for all $t\in [s,T_{\max})$.
	
We proceed by contradiction. Suppose there exists $s^*\in (s,T_{\max})$ such that $\|(y(s^*),a(s^*))\|_{\mathcal{H}}\ge \rho_{\ell}^2$.
Since $\| (y_0,a_0)\|_{\mathcal{H}}<\rho_{\ell}^3<\rho_{\ell}^2$ and $(y,a)\in C([s,T_{\max});\mathcal{H})$, we may define the shortest time $s'\in(s,s^*]$ such that $\| (y(t),a(t))\|_{\mathcal{H}}\ge \rho_{\ell}^2$. Accordingly,
$\| (y(t),a(t))\|_{\mathcal{H}}<  \rho_{\ell}^2$ for $t\in [s,s')$ and $\| (y(s'),a(s'))\|_{\mathcal{H}}=\rho_{\ell}^2$. Since $\|\mathcal K_\ell\|_{\mathcal L}\le\rho_\ell^{-1}$ and $\|(y(t),a(t))\|_{\mathcal H}<\rho_\ell^2$ for $t\in[s,s')$, we have $|\mathcal K_\ell(y(t),a(t))|<\rho_\ell$ for $t\in [s,s')$. Hence, the relation in \eqref{eq} holds on $[s,s')$, which yields the following exponential stability:
$$\| (y(t),a(t))\|_{\mathcal{H}}\le e^{D'{\ell}^{\frac{2}{3}}}\| (y_0,a_0)\|_{\mathcal{H}}e^{-\ell(t-s)}=\rho_{\ell}^{-1}\| (y_0,a_0)\|_{\mathcal{H}}e^{-\ell(t-s)}<\rho_{\ell}^2, \;\; \forall t\in [s,s').$$
Letting $t\to (s')^-$, we conclude $\| (y(s'),a(s'))\|_{\mathcal{H}}<\rho_{\ell}^2$, which contradicts the definition of $s'$. 

Consequently,
$$|\mathcal K_\ell(y(t),a(t))|
\le \rho_\ell^{-1}\|(y(t),a(t))\|_{\mathcal H}
<\rho_\ell,\quad \forall t\in[s,T_{\max}),$$
so the cut-off is inactive. Therefore,
$$|u(t)|+\|(y(t),a(t))\|_{\mathcal H}
\le e^{D'\ell^{2/3}}e^{-\ell(t-s)}
\|(y_0,a_0)\|_{\mathcal H},\quad \forall t\in[s,T_{\max}).$$
Since $w=y-\mathcal Da$, the norm equivalence $\|(w,a)\|_{\mathcal H}\le 2\|(y,a)\|_{\mathcal H}$, together with the bound above, shows that $(w,a)$ remains bounded on every interval $[s,T_{\max})$. Therefore, the blow-up alternative in Lemma \ref{w2} implies $T_{\max}=+\infty$. Thus \eqref{exp-s-heat2} holds.
\end{proof}
\noindent \textbf{Step 2. Finite-time null controllability with $u(t)=\mathcal{F}_{\rho_{\ell_n}}(\mathcal{K}_{\ell_n}(y(t),a(t))),\;\;\forall t\in(T_n,T_{n+1}]$.}

\begin{proposition}\label{main-th-null-control-ne}
There exists a constant $\tilde{Q} \geq 1 $ such that for every $T \in (0,1)$ and for every $(y_0,a_0)\in\mathcal{H}$ satisfying $\|(y_0,a_0)\|_{\mathcal{H}}\leq R:=e^{-\frac{\tilde{Q}}{T^2}}$, we have $\Upsilon(T,0;(y_0,a_0))=0$ in $\mathcal{H}$.
\end{proposition}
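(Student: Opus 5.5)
We will repeat the time-iteration argument of the proof of Corollary~\ref{main-th-null-control} with $p=2$, replacing Theorem~\ref{qu-ra-sta} by Proposition~\ref{qu-ra-sta-ne} on each interval. Keep $T_0=0$, $T_n=T_{n-1}+T/2^n$, and choose rates of the form $\ell_n=Q^3 2^{3n}/T^3$, with $Q$ a large multiple of $D'$. Then $D'\ell_n^{2/3}=D'Q^2 2^{2n}/T^2$, while the decay gained on $(T_n,T_{n+1}]$ is $\ell_n T/2^{n+1}=Q^3 2^{2n}/(2T^2)$. Taking $Q\ge 8D'$, say, the decay dominates three times the cost:
$$
\ell_nT2^{-n-1}\ \ge\ 4D'\ell_n^{2/3}.
$$

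We argue by induction on $n$ and aim to prove the bound
$$
\|(y(T_n),a(T_n))\|_{\mathcal H}\le e^{-3D'\ell_n^{2/3}}\Bigl(=\rho_{\ell_n}^3\Bigr),
$$
with strict inequality, so that Proposition~\ref{qu-ra-sta-ne} is applicable on $(T_n,T_{n+1}]$ with initial time $s=T_n$.

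\emph{Base case.} It requires $R<\rho_{\ell_0}^3=e^{-3D'Q^2/T^2}$, so we take $\tilde Q>3D'Q^2$.

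\emph{Induction step.} Proposition~\ref{qu-ra-sta-ne} gives
$$
\|(y(T_{n+1}),a(T_{n+1}))\|_{\mathcal H}\le e^{D'\ell_n^{2/3}-\ell_nT2^{-n-1}}\rho_{\ell_n}^3\le e^{-7D'\ell_n^{2/3}}.
$$
Since $\ell_{n+1}^{2/3}=4\ell_n^{2/3}$, what we need is $e^{-7D'\ell_n^{2/3}}\le e^{-12D'\ell_n^{2/3}}$. This fails as written, so the constants must be tuned. The cleanest fix is to carry the explicit product of decay factors, exactly as in the proof of Corollary~\ref{main-th-null-control}. That gives
$$
\|(y(T_n),a(T_n))\|_{\mathcal H}\le R\prod_{k<n}e^{D'\ell_k^{2/3}-\ell_kT2^{-k-1}}.
$$
With $Q$ large (for instance $Q\ge 100D'$), each factor is at most $e^{-c Q^3 2^{2k}/T^2}$. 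The accumulated decay then exceeds $3D'\ell_n^{2/3}=3D'Q^22^{2n}/T^2$ by a margin, because the factor $Q^3$ beats $Q^2$. This is the only delicate bookkeeping step; the main obstacle is exactly this choice of $Q$ and $\tilde Q$ making the smallness threshold $\rho_{\ell_n}^3$ propagate. We also need the solution to remain continuous across the switching times, which follows from the solution being in $C([T_n,T_{n+1}];\mathcal H)$ on each interval and being glued at the endpoints.

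\emph{Passage to the limit.} On $(T_n,T_{n+1}]$, the bound $\|(y(t),a(t))\|_{\mathcal H}\le e^{D'\ell_n^{2/3}}\|(y(T_n),a(T_n))\|_{\mathcal H}$ tends to $0$ as $n\to\infty$, since the decay accumulated up to $T_n$ beats $D'\ell_n^{2/3}$. Hence $(y,a)(t)\to0$ as $t\to T^-$. Defining $\Upsilon(T,0;(y_0,a_0))$ as this limit, which is consistent with the continuous extension by $0$, we obtain $\Upsilon(T,0;(y_0,a_0))=0$.

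The constant $\tilde Q$ depends only on $D'$, hence is independent of $T\in(0,1)$, as the statement requires.
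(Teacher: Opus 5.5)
Your proposal is correct and is essentially the paper's proof. It uses the same rates $\ell_n=Q^32^{3n}/T^3$ on the dyadic intervals $(T_n,T_{n+1}]$, applies Proposition~\ref{qu-ra-sta-ne} with $s=T_n$, and checks the explicit product of the factors $e^{D'\ell_k^{2/3}-\ell_kT2^{-k-1}}$ against the threshold $\rho_{\ell_n}^3$; the paper simply fixes $Q=30D'$ and $\tilde Q=7Q^3/45$.

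The bookkeeping you left implicit reduces to $(\tilde Q-3D'Q^2)+(4^n-1)Q^2\bigl(\tfrac{Q}{6}-\tfrac{10D'}{3}\bigr)>0$ for all $n$. This holds with your choices $\tilde Q>3D'Q^2$ and $Q\ge 100D'$; in fact $Q\ge 20D'$ already suffices, and it also makes your first direct induction close.
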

\begin{proof}
 Let us define the following quantities:
\begin{equation*}
\left\{  \begin{aligned}
&Q=30D',\quad R = e^{-\frac{7Q^3}{45T^2}},\\
&\ell_n = \frac{Q^3  2^{3n}}{T^3},\;\;\forall n \in \mathbb{N}.            
\end{aligned}\right.
\end{equation*}
Using the estimate in Proposition \ref{qu-ra-sta-ne} on each interval, the solution satisfies
$$\|(y(T_n),a(T_n))\|_{\mathcal{H}}\le \prod_{k=0}^{n-1} e^{-4^k\frac{7Q^3}{15T^2}}  \|(y_0,a_0)\|_{\mathcal{H}}\le e^{-(4^n-1)\frac{7Q^3}{45T^2}}R<\rho_{\ell_{n}}^3,\;\;\forall n\in \mathbb{N}.$$
Set $\tilde{Q}=\frac{7Q^3}{45}$. The rest of the
argument is carried out exactly as in the proof of Corollary \ref{main-th-null-control}.
\end{proof}
\noindent \textbf{Step 3. Finite-time stabilization.}

\noindent \textbf{Substep 1. Global (in time) well-posedness.}

The analysis of global well-posedness follows the argument in Coron-Xiang \cite{CX}. It is enough to establish well-posedness on $[0,T]$, since the feedback law is $T$-periodic. Using \eqref{Gfeed-new} and the Lipschitz continuity of  $\mathcal{F}_{\rho_{\ell_n}}\mathcal{K}_{\ell_n}$ on each interval $[T_n,T_{n+1}]$, Lemma \ref{w2} yields a unique maximal solution on each such interval. We now derive an a priori estimate that is uniform with respect to $n$.

The bound $|u|\le1$ and the equation $a_t=(1-\frac{\pi^2}{4})a+u$ imply that $a$ and $a_t$ remain bounded on finite time intervals. Let $w=y-\mathcal D a$. On every interval on which the solution exists, $w$ satisfies
$$ w_t+Aw=w-\mathcal Du-y^3.$$
Taking the $L^2(0,1)$ inner product with $w$ gives
\begin{equation}\label{global-L2-energy222} 
\begin{aligned} 
\frac12\frac{d}{dt}\|w(t)\|_{L^2}^2 +\|w_x(t)\|_{L^2}^2 &= \|w(t)\|_{L^2}^2 -u(t)\langle\psi,w(t)\rangle -\|y(t)\|_{L^4}^4 +a(t)\int_0^1\psi(x)y^3(t,x)dx . 
\end{aligned} 
\end{equation}
Using $|u|\le 1$ and Young's inequality, we obtain
$$|u\langle\psi,w\rangle| \le \frac12\|w\|_{L^2}^2 +\frac12\|\psi\|_{L^2}^2\quad\; \text{and}\quad\; \left| a\int_0^1\psi y^3\,dx \right| \le |a|\|\psi\|_{L^4}\|y\|_{L^4}^3 \le \frac12\|y\|_{L^4}^4 +C_\psi |a|^4.$$
Consequently, there exists $C'>0$ such that
\begin{equation}\label{global-uniform-energy} 
\frac{d}{dt}\|w(t)\|_{L^2}^2 +2\|w_x(t)\|_{L^2}^2 +\|y(t)\|_{L^4}^4 \le 3\|w(t)\|_{L^2}^2 +C'\bigl(1+|a(t)|^4\bigr). 
\end{equation}

Integrating \eqref{global-uniform-energy}, and using the boundedness of $a$ together with Gr\"onwall's inequality, we obtain 
\begin{equation}\label{global-uniform-bound} 
\begin{aligned} &\sup_{0\le t<T}\|w(t)\|_{L^2}^2 +\int_0^T\|w_x(t)\|_{L^2}^2\,dt <\infty.\end{aligned} 
\end{equation} 
Hence, the blow-up alternative in Lemma~\ref{w2} allows the solution to be extended successively to every interval $[T_n,T_{n+1}]$, and therefore to the whole interval $[0,T)$. Finally, since $y=w+\mathcal D a$, \eqref{global-uniform-bound} implies
\begin{equation}\label{ya-regular} 
(y,a)\in C([0,T);\mathcal H) \cap L^\infty(0,T;\mathcal H), \qquad y\in L^2(0,T;H^1(0,1)). 
\end{equation} 
It remains to show that 
\begin{equation}\label{we-po-U}
(y(T),a(T)):=\lim_{t\to T^{-}}(y(t),a(t)) \;\;\text{exists in}\;\;\mathcal{H}.
\end{equation}

Let $\mathcal{T}\in (0,T)$, and let $(y_{\mathcal{T}}^{\pm },a_{\mathcal{T}}^{\pm })$ be solutions of 
\begin{equation}\label{maxtowepo}
\left\{\begin{array}{ll}
(y_{\mathcal{T}}^{\pm })_t(t,x) -(y_{\mathcal{T}}^{\pm })_{xx}(t,x)=f&\text{for}\;\;(t,x)\in(\mathcal{T},T)\times(0,1),  \\
(a_{\mathcal{T}}^{\pm })_t(t)=(1-\frac{\pi^2}{4})a_{\mathcal{T}}^{\pm }(t)\pm1 & \text{for}\;\;t\in(\mathcal{T},T),  \\
y_{\mathcal{T}}^{\pm }(t,0)=a_{\mathcal{T}}^{\pm }(t),\;\; y_{\mathcal{T}}^{\pm }(t,1)=0& \text{for}\;\;t\in(\mathcal{T},T),  \\
(y_{\mathcal{T}}^{\pm }(\mathcal{T},x),a_{\mathcal{T}}^{\pm }(\mathcal{T}))=(y(\mathcal{T},x),a(\mathcal{T}))& \text{for}\;\;x\in(0,1).
\end{array}\right.
\end{equation}
By \eqref{f-est-69} and \eqref{ya-regular}, we have $f:=y-y^3\in L^1(0,T;L^2(0,1))$. Set $z_{\mathcal T}^{\pm}:=y_{\mathcal T}^{\pm}-(1-x)a_{\mathcal T}^{\pm}$. Then $z_{\mathcal T}^{\pm}$ satisfies
$$(z_{\mathcal T}^{\pm})_t+Az_{\mathcal T}^{\pm}=f-(1-x)(a_{\mathcal T}^{\pm})_t.$$
Since $(a_{\mathcal T}^{\pm})_t\in L^1(\mathcal T,T)$, the right-hand side belongs to $L^1(\mathcal T,T;L^2(0,1))$. Lemma \ref{well-po-for-in-h} therefore gives $z_{\mathcal T}^{\pm}\in C([\mathcal T,T];L^2(0,1))$, and hence $(y_{\mathcal{T}}^{\pm },a_{\mathcal{T}}^{\pm })\in C([\mathcal{T},T];\mathcal{H})$.

Let us define $(w_{\mathcal{T}},b_{\mathcal{T}}):=(y_{\mathcal{T}}^+-y_{\mathcal{T}}^-,a_{\mathcal{T}}^+-a_{\mathcal{T}}^-)$. In what follows, $C$ denotes a generic positive constant independent of $\mathcal T$. The explicit ODE $$(b_{\mathcal{T}})_t=(1-\frac{\pi^2}{4})b_{\mathcal T}+2\;\;\;\text{with}\;\;b_{\mathcal{T}}(\mathcal T)=0$$ implies that $\|b_{\mathcal{T}}\|_{C([\mathcal{T},T])}+\|(b_{\mathcal{T}})_t\|_{L^1(\mathcal{T},T)}\le C(T-\mathcal{T})$.
Define $z_{\mathcal T}=w_{\mathcal T}-(1-x)b_{\mathcal T}$. One readily obtains $$(z_{\mathcal{T}})_t+Az_{\mathcal{T}}=-(1-x)(b_{\mathcal T})_t \;\;\;\text{with} \;z_{\mathcal{T}}(\mathcal{T})=0 \;\text{in}\; L^2(0,1).$$ It follows from \eqref{norm-est-in-h} that $\|z_{\mathcal{T}}\|_{C([\mathcal{T},T];L^2(0,1))}\le \|(1-x)(b_{\mathcal{T}})_t\|_{L^1(\mathcal{T},T;L^2(0,1))}$. Hence,
\begin{equation*} 
\|w_{\mathcal{T}}\|_{C([\mathcal{T},T];L^2(0,1))}\le \|(1-x)(b_{\mathcal{T}})_t\|_{L^1(\mathcal{T},T;L^2(0,1))}+\|(1-x)b_{\mathcal{T}}\|_{C([\mathcal{T},T];L^2(0,1))}\le C(T-\mathcal{T}).
\end{equation*}

Given $\varepsilon>0$, choose $\mathcal T_1 \in (0,T)$ so close to $T$ that $C(T-\mathcal T_1) \le \varepsilon/4$.
Therefore, 
\begin{equation}\label{we-U-cha2}
\|(w_{\mathcal{T}_1},b_{\mathcal{T}_1})\|_{C([\mathcal{T}_1,T];\mathcal{H})}\le  \frac{\varepsilon}{4}.
\end{equation}
Moreover, from $b_{\mathcal{T}_1}(\cdot)\ge 0$ and the maximum principle (Lemma \ref{max-for-in-h}), we further have
$$y_{\mathcal{T}_1}^-(t,\cdot)\le  y(t,\cdot)\le  y_{\mathcal{T}_1}^+(t,\cdot),\;\;\; a_{\mathcal{T}_1}^-(t)\le  a(t)\le  a_{\mathcal{T}_1}^+(t)\;\;\;\text{for}\;t\in[\mathcal{T}_1,T),$$
which, together with \eqref{we-U-cha2}, implies that
\begin{equation}\label{comp2}
\|( y_{\mathcal{T}_1}^+-y,a_{\mathcal{T}_1}^+-a)\|_{C([\mathcal{T}_1,T);\mathcal{H})}\le \|( w_{\mathcal{T}_1},b_{\mathcal{T}_1})\|_{C([\mathcal{T}_1,T);\mathcal{H})}\le\frac{\varepsilon}{4}.
\end{equation}
Since $( y_{\mathcal{T}_1}^+, a_{\mathcal{T}_1}^+)\in C([\mathcal{T}_1,T];\mathcal{H})$, there exists $\tilde{\mathcal{T}}_1\in [\mathcal{T}_1,T)$ such that
\begin{equation}\label{comp3}
\|( y_{\mathcal{T}_1}^+(t)-y_{\mathcal{T}_1}^+(T),a_{\mathcal{T}_1}^+(t)-a_{\mathcal{T}_1}^+(T))\|_{\mathcal{H}}\le\frac{\varepsilon}{4},\;\;\forall t\in[\tilde{\mathcal{T}_1},T].
\end{equation}
From \eqref{comp2} and \eqref{comp3}, we derive
\begin{equation}\label{comp4}
\|( y(t)-y(t'),a(t)-a(t'))\|_{\mathcal{H}}\le\varepsilon,\;\;\forall t,t'\in[\tilde{\mathcal{T}_1},T).
\end{equation}
This implies that $(y(t),a(t))$ is Cauchy in $\mathcal{H}$ as $t\to T^{-}$, and hence \eqref{we-po-U} is valid, which completes the proof of properness.

\noindent \textbf{Substep 2. Finite-time null controllability.}

Let $\varepsilon>0$. Combining the norm equivalence $$\|(y,a)\|_{\mathcal H}\le 2\|(y-\mathcal Da,a)\|_{\mathcal H}, \quad \|(y-\mathcal Da,a)\|_{\mathcal H}\le 2\|(y,a)\|_{\mathcal H}, $$ with the control bound $|u|\le 1$, Lemma \ref{w1}, applied to $(w,a)=(y-\mathcal Da,a)$, yields some $\hat T=\hat T(\varepsilon)\in(0,T)$ such that
\begin{equation}\label{1to269}\|\Upsilon(t,t';(y_0,a_0))\|_{\mathcal{H}}\le \varepsilon,\;\;\;\;\forall \|(y_0,a_0)\|_{\mathcal{H}}\le\varepsilon/8,\;\forall t'\in [\hat{T},T),\;\forall t\in[t',T].\end{equation}

Observe that only finitely many feedback laws are involved in $[0,\hat T)$, say those corresponding to $\ell_0,\ldots,\ell_{\mathcal N}$, where ${\mathcal N}={\mathcal N}(\varepsilon)$. Choosing $\tilde R=\tilde R(\varepsilon)>0$ sufficiently small so that $ \left(\prod_{n=0}^{\mathcal N}M_{\ell_n}\right)\tilde R$ is smaller than both $\varepsilon/8$ and $\min\limits_{0\leq n\leq\mathcal N}\rho_{\ell_n}^3$, Proposition \ref{qu-ra-sta-ne} can be applied successively on each interval $[T_n,T_{n+1})\cap[0,\hat T)$. Hence, by a finite induction over every interval, there exists a constant $\varsigma=\varsigma(\varepsilon)<\min\{\tilde R,\varepsilon/8\}$ such that
\begin{equation}\label{1to2610}\|\Upsilon(t,t';(y_0,a_0))\|_{\mathcal H}\leq \varepsilon/8,
\quad\;\;\forall \|(y_0,a_0)\|_{\mathcal H}\leq \varsigma,\;\;\forall t'\in[0,\hat T),\;\;\forall t\in[t',\hat T].\end{equation}

We now apply \eqref{1to2610} with $\varepsilon=R$ and set $\varrho:=\varsigma(R)$. From \eqref{1to269} and \eqref{1to2610}, if $\|(y_0,a_0)\|_{\mathcal{H}}\le \varrho$, then 
$$\|\Upsilon(T,t';(y_0,a_0))\|_{\mathcal{H}}\le R,\;\forall t'\in [0,T].$$
Proposition \ref{main-th-null-control-ne} and the $T$-periodicity of the feedback law give
$$\|\Upsilon(2T,t';(y_0,a_0))\|_{\mathcal{H}}=0,\;\forall t'\in [0,T].$$
This completes the proof of this part.

\noindent \textbf{Substep 3. Uniform stability.}

Let $\epsilon>0$. We first consider the case $t'\in[0,T]$. For every $r>0$, let $\hat T_r:=\hat T(r)$ and $\varsigma(r)$ be the constants constructed in Substep 2. We first recall the following consequence of the previous arguments. 
If $\|(y_0,a_0)\|_{\mathcal H}\le \varsigma(r)$, then
\begin{equation}\label{uniform-small-on-one-period}
\|\Upsilon(t,t';(y_0,a_0))\|_{\mathcal H}\le r,
\quad \;\forall\, t'\in[0,T],\;\; \forall\, t\in[t',T].
\end{equation}
Indeed, if $t'\in[0,\hat T_r)$, then by the definition of $\varsigma(r)$ we have $\|\Upsilon(\hat T_r,t';(y_0,a_0))\|_{\mathcal H}\le \frac r8$. \eqref{1to269} gives
$$\|\Upsilon(t,t';(y_0,a_0))\|_{\mathcal H}\le r,
\quad \forall t\in[\hat T_r,T].$$
If $t'\in[\hat T_r,T]$, the same estimate follows directly from Lemma \ref{w1}, since
$\varsigma(r)<r/8$. This proves \eqref{uniform-small-on-one-period}.

Now set
$$\eta:=\varsigma\left(\epsilon\right),
\qquad
\delta:=\min\left\{\varrho,\varsigma\left(\eta\right)\right\}=\min\left\{\varrho,\varsigma\left(\varsigma(\epsilon)\right)\right\}.$$
Assume that $\|(y_0,a_0)\|_{\mathcal H}\le\delta$. Applying
\eqref{uniform-small-on-one-period} with $r=\eta$, we obtain
$$\|\Upsilon(t,t';(y_0,a_0))\|_{\mathcal H}\le \eta=\varsigma\left(\epsilon\right),
\quad\; \forall t'\in[0,T],\;\;\forall t\in [t',T].$$
Using the $T$-periodicity of the feedback and applying
\eqref{uniform-small-on-one-period} once more, now with $r=\epsilon$, yields
$$\|\Upsilon(t,t';(y_0,a_0))\|_{\mathcal H}\le \epsilon,\quad \forall t\in[T,2T].$$
Together with the first application, this gives
$$\|\Upsilon(t,t';(y_0,a_0))\|_{\mathcal H}\le \epsilon,\quad \forall t\in[t',2T].$$

On the other hand, since $\delta\le\varrho$, Substep 2 implies
$$\Upsilon(2T,t';(y_0,a_0))=0,\quad \forall t'\in[0,T].$$
It follows that $\Upsilon(t,t';(y_0,a_0))=0$ for all $ t\ge 2T$. Therefore,
$$\|\Upsilon(t,t';(y_0,a_0))\|_{\mathcal H}\le \epsilon,\quad \;\forall t'\in[0,T],\;\;\forall t\ge t'.$$
By $T$-periodicity, the same argument applies to any $t'\ge0$, completing the proof of uniform stability.\hfill$\square$
\end{appendices}

\bibliographystyle{siamplain}  
\bibliography{references}
\end{document}